\newtheorem{thm}{Theorem}[section]
\newtheorem{prop}[thm]{Proposition}
\newtheorem{cor}[thm]{Corollary}
\theoremstyle{definition}
\newtheorem{prob}[thm]{Problem}
\newcommand{\C}{\mb C}
\newcommand{\hra}{\hookrightarrow}
\newcommand{\mb}{\mathbb}
\newcommand{\mf}{\mathfrak}
\newcommand{\HH}{\mb H}
\newcommand{\N}{\mb N}
\newcommand{\p}{\mf p}
\newcommand{\q}{\mf q}
\newcommand{\Q}{\mb Q}
\newcommand{\R}{\mb R}
\newcommand{\thra}{\twoheadrightarrow}
\newcommand{\tl}{\mathrel \triangleleft}
\newcommand{\ve}{\varepsilon}
\newcommand{\Z}{\mb Z}
\title{A dichotomy for Polish modules}
\date{\today}
\author{Joshua Frisch}
\address{Department of Mathematics,
California Institute of Technology,
1200 E. California Blvd.,
Pasadena,
CA 91125}
\email{jfrisch@caltech.edu}
\author{Forte Shinko}
\address{Department of Mathematics,
California Institute of Technology,
1200 E. California Blvd.,
Pasadena,
CA 91125}
\email{fshinko@caltech.edu}
\thanks{The authors were partially supported by NSF Grant DMS-1950475.}
\begin{document}

\maketitle

\begin{abstract}
    Let $R$ be a ring equipped with a proper norm.
    We show that under suitable conditions on $R$,
    there is a natural basis under continuous linear injection
    for the set of Polish $R$-modules which are not countably generated.
    When $R$ is a division ring,
    this basis can be taken to be a singleton.
\end{abstract}

\section{Introduction}
The axiom of choice allows us to construct many abstract algebraic homomorphisms
between topological algebraic systems
which are incredibly non-constructive.
A longstanding theme in descriptive set theory
is to study to what extent we can,
and to what extent we provably cannot,
construct such homomorphisms in a ``definable'' way.
Here the notion of definability is context-dependent
but often includes continuous, Borel, or projective maps.

A classical example of such an abstract construction,
which provably cannot be constructed with ``nice'' sets
is the existence of a Hamel basis for $\R$ over $\Q$.
It is well-known that such a basis cannot be Borel,
or more generally, analytic.
Similar phenomena show up when constructing Hamel bases for topological vector spaces,
or constructing an isomorphism of the additive groups of $\R$ and $\C$.

A more recent theme in descriptive set theory
is that such undefinability criteria can often be leveraged in order to gain,
and hopefully utilize, additional structure.
For example,
Silver's theorem \cite{Sil80} and the Glimm-Effros dichotomy \cite{HKL90}
interpret the non-reducibility of Borel equivalence relations
not as a pathology but rather as the first step in the burgeoning theory
of invariant descriptive set theory
(see \cite{Gao09} for background).
Similarly,
work starting with \cite{KST99} studies and exploits the difference
between abstract chromatic numbers and more reasonably definable
(for example, continuous or Borel)
chromatic numbers. 
A key feature in many of these theories
(and all of the above examples)
is the existence of dichotomy theorems,
which state that either an object is simple,
or there is a canonical obstruction contained inside of it.
This is usually stated in terms of preorders,
saying that there is a natural basis for the preorder
of objects which are not simple
(recall that a \textbf{basis} for a preorder $P$
is a subset $B \subseteq P$ such that for every $p \in P$,
there is some $b\in B$ with $b \le p$).

In this paper,
we apply a descriptive set-theoretic approach to vector spaces and more generally,
modules, over a locally compact Polish ring\footnotemark.
\footnotetext{All rings will be assumed to be unital.}
For a Polish ring $R$,
a \textbf{Polish $R$-module}
is a topological left $R$-module
whose underlying topology is Polish.
Given Polish $R$-modules $M$ and $N$,
we say that $M$ \textbf{embeds} into $N$,
denoted $M \sqsubseteq^R N$,
if there is a continuous linear injection from $M$ into $N$.
One particularly nice aspect of Polish modules is that
the notion of ``definable'' reduction is much simpler than in the general case.
By Pettis's lemma,
any Baire-measurable homomorphism between Polish modules
is in fact automatically continuous
(see \cite[9.10]{Kec95}).
Thus there is no loss of generality in considering continuous homomorphisms
rather than a priori more general Borel homomorphisms.

Our main results give a dichotomy for Polish modules being countably generated.
More precisely,
we give a countable basis under $\sqsubseteq^R$
for Polish modules which are not countably generated.
While these results are stated in a substantial level of generality
(they are true for all left-Noetherian countable rings
and many Polish division rings),
we feel that the most interesting cases are over some of the most concrete rings.
For example,
over $\Q$,
we show the existence of a unique
(up to bi-embeddability)
minimal uncountable Polish vector space $\ell^1(\Q)$.
We further show that nothing bi-embeddable with
$\ell^1(\Q)$ is locally compact,
and thus that every uncountable-dimensional locally compact Polish vector space
(for example, $\R$)
is strictly more complicated than $\ell^1(\Q)$. 

Another case of particular interest is the case of $\Z$-modules,
that is, abelian groups. 
We show that there is a countable basis of minimal uncountable abelian Polish groups
(one for each prime number and one for characteristic $0$).
Furthermore,
there exists a maximal abelian Polish group by \cite{Shk99},
as well as many natural but incomparable elements
(for example,
$\Q_p$ and $\R$ are incomparable under $\sqsubseteq^\Q$
as are $\Q_p$ and $\Q_r$ for $p\neq r$). 

Our dichotomy theorems will hold for rings equipped with a proper norm.
A \textbf{(complete, proper) norm} on an abelian group $A$
is a function $\norm{\cdot} \colon A \to [0,\infty)$
such that the map $(a, b)\mapsto \norm{a - b}$
is a (complete, proper) metric on $A$
(recall that a metric is proper if every closed ball is compact).
A \textbf{norm} on a ring $R$
is a norm $|\cdot|$ on $(R, +)$
such that $|rs| \le |r| |s|$
for every $r, s \in R$.
A \textbf{proper normed ring}
is a ring equipped with a proper norm.
Every countable ring admits a proper norm
(see \Cref{ProperNorm}).
Given a proper normed ring $R$,
the $R$-module $\ell^1(R)$ is defined as follows:
\[
    \ell^1(R)
    = \left\{(r_k)_k\in R^\N
    : \sum_k \frac{|r_k|}{k!} < \infty\right\}
\]
(here,
$\frac{1}{k!}$ can be replaced with any summable sequence).
Then $\norm{(r_k)_k} := \sum_k \frac{|r_k|}{k!}$
is a complete separable norm on $(\ell^1(R), +)$,
turning $\ell^1(R)$ into a Polish $R$-module.

The following theorems will be obtained as
special cases of results in \Cref{proofs}.

A \textbf{division ring} is a ring $R$
such that every nonzero $r \in R$
has a two-sided inverse.
\begin{thm}\label{division}
    Let $R$ be a proper normed division ring
    and let $M$ be a Polish $R$-vector space.
    Then exactly one of the following holds:
    \begin{enumerate}[label=(\arabic*)]
        \item $\dim_R(M)$ is countable.
        \item $\ell^1(R) \sqsubseteq^R M$.
    \end{enumerate}
\end{thm}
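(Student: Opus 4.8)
The plan is to settle ``exactly one'' first and then to produce the embedding. The two alternatives are incompatible: by a Baire category argument of the kind that shows infinite-dimensional Banach spaces have uncountable Hamel dimension, $\dim_R\ell^1(R)$ is uncountable (equivalently, $\ell^1(R)$ is not countably generated), so an embedding $\ell^1(R)\sqsubseteq^R M$ forces $\dim_R M\ge\dim_R\ell^1(R)$ and hence negates~(1). It remains to prove that if $\dim_R M$ is uncountable then $\ell^1(R)\sqsubseteq^R M$. Throughout, fix a compatible complete invariant metric $d$ on $M$, available since $M$ is an abelian Polish group.

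A continuous $R$-linear map $\ell^1(R)\to M$ is precisely the datum of a sequence $(x_k)_k$ in $M$ for which
\[
    \phi\big((r_k)_k\big):=\sum_k r_k x_k
\]
converges for all $(r_k)_k\in\ell^1(R)$ and defines a continuous map; so I must choose $(x_k)$ making $\phi$ well-defined, continuous, and \emph{injective}. I would build $(x_k)$ recursively, selecting at stage $k$ also an open neighborhood $U_k\ni 0$ of $M$ and a real $\varepsilon_k>0$ with $\sum_k\varepsilon_k<\infty$, and requiring $x_k\in U_k$. Well-definedness and continuity come from making the $U_k$ small: since scalar multiplication $R\times M\to M$ is jointly continuous and the ball $\{r\in R:|r|\le k!\}$ is compact (properness of the norm on $R$), a tube-lemma argument yields an open $U_k\ni 0$ with $rU_k\subseteq B_{\varepsilon_k}(0)$ whenever $|r|\le k!$, and $U_k\ne\{0\}$ because $M$ is nondiscrete (a discrete Polish space is countable, hence of countable dimension). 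For $(r_k)_k\in\ell^1(R)$ one has $|r_k|\le k!$ for all large $k$, so $\sum_k r_k x_k$ has a $d$-Cauchy tail and converges; and $\phi$ is continuous at $0$ since near $0\in\ell^1(R)$ all but finitely many coordinates satisfy $|r_k|\le k!$ and contribute at most $\sum_{k>N}\varepsilon_k$, while the remaining finitely many range over compact balls of $R$ on which $r\mapsto rx_k$ is uniformly close to $0$.

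Injectivity is the heart of the argument, and I would obtain it by Baire category in the Polish space $\mathcal X:=\prod_k U_k$ of admissible sequences, showing $\{(x_k)\in\mathcal X:\phi_{(x_k)}\text{ injective}\}$ is comeager, hence nonempty. For fixed $\rho=(r_k)\in\ell^1(R)\setminus\{0\}$ the set $Z_\rho:=\{(x_k)\in\mathcal X:\sum_k r_k x_k=0\}$ is closed and nowhere dense: if a basic box $\prod_k O_k$ lay inside $Z_\rho$, choose $j$ with $r_j\ne 0$ and fix $(x_k)_{k\ne j}$ in $\prod_{k\ne j}O_k$; then $r_jx_j=-\sum_{k\ne j}r_k x_k$ pins $x_j$ to one value as $x_j$ ranges over the nonempty open set $O_j$, which is absurd --- here $r_j$ must be invertible, i.e. $R$ is a division ring. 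The obstruction is that $\ell^1(R)\setminus\{0\}$ is uncountable, so meagerness of $\bigcup_\rho Z_\rho$ is not automatic. To get around this, group the $\rho$ by the least index $j$ of their support and normalize $r_j=1$ (again using invertibility); then
\[
    \bigcup_{\min\operatorname{supp}\rho=j}Z_\rho=\big\{(x_k)\in\mathcal X:\ x_j\in -\operatorname{Im}\phi^{(x)}_{>j}\big\},
\]
where $\phi^{(x)}_{>j}\colon(s_k)_{k>j}\mapsto\sum_{k>j}s_k x_k$ depends only on $(x_k)_{k>j}$. By the Kuratowski--Ulam theorem this set is meager in $\mathcal X$ as soon as, for comeagerly many $(x_k)_{k>j}$, the subspace $\operatorname{Im}\phi^{(x)}_{>j}$ is meager in $M$; a countable union over $j$ then shows $\{(x_k):\phi_{(x_k)}\text{ not injective}\}$ is meager.

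Everything therefore reduces to the lemma I expect to be the main obstacle: \emph{if the $U_k$ are chosen to shrink fast enough, then for every $(x_k)\in\mathcal X$ the $\ell^1(R)$-span $\Sigma_{(x_k)}:=\{\sum_k s_k x_k:(s_k)\in\ell^1(R)\}$ is meager in $M$.} The attack is that $\Sigma_{(x_k)}$ is an analytic subgroup of $M$, hence meager unless it contains a neighborhood of $0$; one rules out the latter because, the $x_k$ being very small and the coefficients obeying $\sum_k|s_k|/k!<\infty$, the span is too thin to exhaust $M$ --- when $M$ is not locally compact this is packaged via compactness of the balls of $R$ (making each bounded piece of $\Sigma_{(x_k)}$ a compact, hence nowhere dense, set), and when $M$ is locally compact, e.g. $M=\R$ over $R=\Q$, one argues directly that the summability constraint keeps $\Sigma_{(x_k)}$ a proper, hence meager, subspace. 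This is precisely where the hypothesis that $\dim_R M$ is uncountable --- that $M$ is not countably generated --- is genuinely used: for small $M$ such a span could exhaust it. Carrying this out uniformly in $(x_k)$ and merging the locally compact and non-locally-compact cases is the principal difficulty.
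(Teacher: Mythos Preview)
Your approach diverges from the paper's at the injectivity step, and the divergence creates a real gap. You fix the neighborhoods $U_k$ in advance, consider the product space $\mathcal X=\prod_k U_k$, and try to show that for comeagerly many $(x_k)\in\mathcal X$ the resulting $\phi$ is injective. This reduces to your key lemma: for (comeagerly many, or all) $(x_k)\in\mathcal X$, the $\ell^1$-span $\Sigma_{(x_k)}$ is meager in $M$. But the ``for all'' version is simply false. Take $R=\mathbb F_2$, so $\ell^1(R)=R^{\mathbb N}$, and $M=\mathbb F_2^{\mathbb N}$ with $U_k=\{x:x_i=0\text{ for }i<k\}$; setting $x_k=e_k$ gives $x_k\in U_k$ and $\Sigma_{(x_k)}=M$. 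More to the point, your sketch for the locally compact case (``the summability constraint keeps $\Sigma_{(x_k)}$ a proper, hence meager, subspace'') does not go through: $\Sigma$ is an analytic subgroup, and a proper analytic subgroup can still be open and hence nonmeager. You acknowledge this as ``the principal difficulty,'' but it is not a detail to be filled in --- it is the entire content of the theorem, and it is not clear that the comeager version of your lemma is even true.

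The paper sidesteps this completely by interleaving the choices. Rather than fixing $U_k$ first and searching for $(x_k)$ afterward, it builds $\varepsilon_k>0$ and $m_k\in M$ alternately: at stage $k$ one first picks $\varepsilon_k$ small enough that every finite sum $\sum_{i<k}r_i m_i$ with leading coefficient normalized to $1$ and $\sum_{i<k}|r_i|/i!\le k$ has norm at least $\varepsilon_k$ (possible since there are only compactly many such tuples, by properness of the norm on $R$, and each such sum is nonzero by the inductive independence of the $m_i$). Then one picks $m_k$ outside $Rm_0+\cdots+Rm_{k-1}$ and small enough that $\|rm_k\|<\tfrac12\varepsilon_k$ whenever $|r|\le k\cdot k!$. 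The point is that the meagerness needed here is only that of the \emph{finite} span $Rm_0+\cdots+Rm_{k-1}$, which is $\sigma$-compact and proper (this is exactly where uncountable dimension is used), hence meager --- no statement about the full $\ell^1$-span is required. Injectivity then follows from a direct norm estimate: given nonzero $(r_k)$, scale so the first nonzero entry is $1$, pick $n$ large, and observe $\|\sum_{k<n}r_k m_k\|\ge\varepsilon_n$ while the tail has norm strictly less than $\varepsilon_n$. Your Kuratowski--Ulam machinery is replaced by this elementary bookkeeping, and the hard lemma never arises.
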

This seems to be new,
even when $R$ is a finite field,
in which case $\ell^1(R) = R^\N$.
This also implies a special case of \cite[Theorem 24]{Mil12},
which says that if $\dim_R(M)$ is uncountable,
then there is a linearly independent perfect set
(see \Cref{miller}).

An analogous statement holds for
a large class of discrete rings.
A ring is \textbf{left-Noetherian}
if every increasing sequence of left ideals stabilizes.
\begin{thm}\label{discrete}
    Let $R$ be a left-Noetherian discrete proper normed ring
    and let $M$ be a Polish $R$-module.
    Then exactly one of the following holds:
    \begin{enumerate}[label=(\arabic*)]
        \item $M$ is countable.
        \item $\ell^1(S) \sqsubseteq^R M$ for some nonzero quotient $S$ of $R$.
    \end{enumerate}
\end{thm}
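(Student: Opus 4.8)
I would begin by noting that such an $R$ is automatically countable: its closed norm-balls are compact and discrete, hence finite, and $R$ is their union. Two preliminary observations then dispose of the easy points. First, clauses (1) and (2) are incompatible: for nonzero $S$ the module $\ell^1(S)$ is uncountable, since it contains every sequence with all entries in $\{0,s\}$ for a fixed $s\neq 0$ (then $\sum_k |r_k|/k!\le |s|e<\infty$), so it admits no continuous linear injection into a countable module. Second, since $R$ is countable and left-Noetherian, an $R$-module is countably generated iff it is countable (a finitely generated module over a countable ring is countable, a countably generated one is a countable union of such, and a countable module is generated by its elements); so (1) is just ``$M$ is countable''. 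It remains to show that an uncountable Polish $R$-module satisfies (2).

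The next step is a reduction using the Noetherian hypothesis. As $R$ is countable and left-Noetherian it has only countably many left ideals (each is finitely generated, hence given by a finite tuple from $R$), and these satisfy the ascending chain condition. For a left ideal $I$ the set $\Ann_M(I)=\{m\in M:Im=0\}$ is a closed submodule. Since $\Ann_M(0)=M$ is uncountable, the family of left ideals with uncountable annihilator in $M$ is nonempty and so, by the chain condition, has a maximal member $I_0$, with $I_0\neq R$ because $\Ann_M(R)=0$. Put $S=R/I_0\neq 0$ and $N=\Ann_M(I_0)$, a Polish $S$-module that is uncountable while, by maximality of $I_0$, $\Ann_N(J)$ is countable for every nonzero left ideal $J$ of $S$. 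Partitioning $N$ by the $S$-annihilator of an element, and using that there are countably many left ideals and that each $m$ with $\Ann_S(m)=J\neq 0$ lies in the countable set $\Ann_N(J)$, the set $E=\{m\in N:\Ann_S(m)=0\}$ must be uncountable. Replacing $N$ by the closed submodule it generates, we may assume $N$ is uncountable --- hence a perfect Polish space, so its nonempty open subsets are uncountable and its countable subsets are meager --- keeping all the above. Finally $0\in\overline E$: otherwise an open neighborhood $U$ of $0$ misses $E$, so $U\subseteq\bigcup_{r\neq 0}\{m\in N:rm=0\}$, and by the Baire category theorem some $\{m:r_0m=0\}=\Ann_N(Sr_0)$ would contain a nonempty open set, contradicting its countability. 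It now suffices to produce a continuous $S$-linear injection $\ell^1(S)\to N$; renaming $(S,N,E)$ as $(R,M,E)$, the situation is: $M$ is a perfect Polish $R$-module, $E$ (the set of elements with trivial annihilator) has $0$ in its closure, and $\Ann_M(J)$ is countable for every nonzero left ideal $J$ of $R$, and we want a continuous $R$-linear injection $\ell^1(R)\to M$.

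The construction I propose is a recursion producing $m_0,m_1,\dots\in E$, with $\phi((r_k)_k):=\sum_k r_k m_k$, using a compatible translation-invariant metric $d$ on $M$. At stage $k$: since $\{r\in R:|r|\le k!\}$ is finite and $rm\to 0$ as $m\to 0$ for each fixed $r\in R$, there is an open neighborhood $V_k$ of $0$ with $\sup\{d(rm,0):|r|\le k!\}\le 2^{-k}$ for all $m\in V_k$; the set $E\cap V_k$ is uncountable (the Baire argument above relativizes to $V_k$); and the set $B_k=\{m:\exists r\neq 0\ (rm\neq 0\text{ and }rm\in\langle m_0,\dots,m_{k-1}\rangle)\}$ is countable, being a countable union over $r\neq 0$ and the countably many nonzero $c\in\langle m_0,\dots,m_{k-1}\rangle$ of cosets of the countable sets $\Ann_M(Rr)$. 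So one can choose $m_k\in(E\cap V_k)\setminus B_k$. The neighborhood conditions make $\sum_k r_k m_k$ converge, absolutely and uniformly on the unit ball of $\ell^1(R)$, so $\phi$ is continuous there (a uniform limit of the continuous partial sums) hence everywhere, and $R$-linear; and $Rm_k\cap\langle m_0,\dots,m_{k-1}\rangle=0$ together with $m_k\in E$ forces the $m_k$ to be $R$-linearly independent, so no finitely supported sequence lies in $\ker\phi$.

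The main obstacle --- and the part needing real work --- is upgrading this to injectivity. A closed submodule of $\ell^1(R)$ can meet the dense set of finitely supported sequences only in $0$ without being $0$, so $\ker\phi=0$ does not follow formally and must be engineered into the recursion. To force it I would add at each stage a quantitative separation condition tying $m_k$ to $m_0,\dots,m_{k-1}$, arranged so that if some $(r_k)\neq 0$ lay in $\ker\phi$ with least nonzero coordinate $k_0$, then the partial sums $\sum_{k\le n}r_k m_k$ would leave a controlled neighborhood of $0$ at stage $k_0$ and --- the full series converging to $0$ --- would have to return to it, which the separation condition precludes. The delicate point, which I expect to be the crux, is making such a condition coexist with the smallness condition $m_k\in V_k$, which pulls $rm_k$ toward $0$ while $0$ is itself among the combinations of $m_0,\dots,m_{k-1}$; reconciling the two is exactly where the \emph{properness} of the norm on $R$ should be used quantitatively, to bound how far a combination $\sum_{j<k}r_j m_j$ with norm-bounded coefficients can reach away from $0$. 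Granting that this is arrangeable, $\phi$ is a continuous linear injection, so $\ell^1(R)\sqsubseteq^R M$, which after undoing the reduction gives clause (2). (As a sanity check on the statement: if $R$ is a proper normed division ring the reduction is vacuous --- $R$ has no nonzero proper quotient and every nonzero element has trivial annihilator --- and what remains is essentially the proof of \Cref{division}.)
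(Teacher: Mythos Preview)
Your overall architecture---reduce to a quotient ring, then build an $\ell^1$-embedding by a recursive choice of small, independent $m_k$---matches the paper's, and you correctly isolate injectivity as the crux. But the reduction step has a real gap in the noncommutative case. You assert that for a \emph{left} ideal $I$ the set $\Ann_M(I)=\{m:Im=0\}$ is a closed \emph{submodule}; this is false in general (for $m\in\Ann_M(I)$ and $r\in R$ one needs $(ar)m=0$ for all $a\in I$, but $ar$ need not lie in $I$). Consequently your maximal $I_0$ need not be two-sided, $S=R/I_0$ need not be a ring, and $N=\Ann_M(I_0)$ need not be an $R$-module---so the ``Polish $S$-module'' you pass to may not exist. (A concrete failure: $R=M_2(\mathbb F_p)$ acting on itself, $I$ the left ideal of matrices with zero second column; then $\Ann_R(I)$ is the set of matrices with zero first row, which is not a left ideal.) Your subsequent counting arguments for $B_k$ and for $E\cap V_k$ lean on $\Ann_N(Sr)$ being countable for every nonzero $r\in S$, which again presumes this module structure.

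The paper's reduction avoids this by working with the topology rather than with annihilators of subsets: it fixes a neighborhood basis $(U_k)_k$ of $0$ and sets $I_k=\{r:rU_k=0\}$; these are left ideals which stabilize (Noetherian) at some $I=I_n$, and $I$ is \emph{automatically two-sided} because multiplication by any $r$ is continuous, so $rU_k\subseteq U_n$ for large $k$ and hence $IrU_k\subseteq IU_n=0$. After shrinking $M$ to the (open) submodule generated by $U_n$, one has $IM=0$ and, for every $r\notin I$, the closed subgroup $\{m:rm=0\}$ is non-open, hence meager---which is exactly the Baire input your construction needs. For the injectivity you flagged, the paper interleaves choices of $\varepsilon_k>0$ and $m_k$: at stage $k$ one takes $\varepsilon_k$ below $\tfrac12\varepsilon_{k-1}$ and below $\lVert\sum_{i<k}r_i m_i\rVert$ for the finitely many nonzero combinations with $\sum_i |r_i|/i!\le k$ (finiteness uses discreteness and properness), and then picks $m_k$ with $\lVert r m_k\rVert<\tfrac12\varepsilon_k$ for all $r$ with $|r|\le k\cdot k!$ while avoiding $\bigcup_{r\notin I}\,(Rm_0+\cdots+Rm_{k-1})\cdot r^{-1}$. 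For any $(r_k)\notin\ell^1(I)$ this forces a large partial sum to have norm at least $\varepsilon_n$ while the tail has norm strictly less than $\varepsilon_n$, giving the injection. Your sketch of the separation scheme is headed in this direction; the missing ingredient is precisely this $\varepsilon_k$-bookkeeping.
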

Note that this basis is countable since a countable left-Noetherian ring
only has countably many left ideals.

For abelian Polish groups,
we obtain an irreducible basis
(see \Cref{abelian}):
\begin{thm}\label{abelianMain}
    Let $A$ be an uncountable abelian Polish group.
    Then one of the following holds:
    \begin{enumerate}
        \item $\ell^1(\Z) \sqsubseteq^\Z A$.
        \item $(\Z/p\Z)^\N \sqsubseteq^\Z A$ for some prime $p$.
    \end{enumerate}
\end{thm}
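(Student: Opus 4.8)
The plan is to derive this directly from \Cref{discrete} applied to the ring $R = \Z$. The first step is to check that $\Z$ satisfies the hypotheses of that theorem: it is discrete and left-Noetherian (being a principal ideal domain), and it carries a proper norm, namely the usual absolute value, since each closed ball $\{n \in \Z : |n| \le C\}$ is finite and hence compact. Moreover, a Polish $\Z$-module is precisely an abelian Polish group. Hence \Cref{discrete} applies to $A$: since $A$ is uncountable, we obtain $\ell^1(S) \sqsubseteq^\Z A$ for some nonzero quotient $S$ of $\Z$.

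Next I would enumerate the nonzero quotients of $\Z$: up to isomorphism these are $\Z$ itself together with the rings $\Z/n\Z$ for $n \ge 2$. If $S = \Z$, then $\ell^1(\Z) \sqsubseteq^\Z A$ and we are in case (1), so I may assume $S = \Z/n\Z$ with $n \ge 2$. The next step is the observation that $\ell^1(\Z/n\Z)$ is just $(\Z/n\Z)^\N$ with the product topology: since $\Z/n\Z$ is finite its norm is bounded, so the summability condition defining $\ell^1(\Z/n\Z)$ holds for every sequence, and splitting a sum $\sum_k |r_k|/k!$ into a finite initial part and a tail that is uniformly small (bounded by the maximal norm on $\Z/n\Z$ times $\sum_{k > K} 1/k!$) shows that the $\ell^1$-metric induces the product topology. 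Thus $(\Z/n\Z)^\N \sqsubseteq^\Z A$.

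It then remains to pass from the modulus $n$ to a prime. Fixing a prime $p$ dividing $n$, the cyclic subgroup of $\Z/n\Z$ generated by $\overline{n/p}$ has order $p$, so there is an injective $\Z$-module homomorphism $\Z/p\Z \hookrightarrow \Z/n\Z$, which is continuous since both groups are discrete. Taking the product over $\N$ gives a continuous $\Z$-linear injection $(\Z/p\Z)^\N \hookrightarrow (\Z/n\Z)^\N$, and composing this with the embedding $(\Z/n\Z)^\N \sqsubseteq^\Z A$ (using that a composition of continuous linear injections is again one) yields $(\Z/p\Z)^\N \sqsubseteq^\Z A$, i.e. case (2).

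All of the real content lies in \Cref{discrete}, which is proved earlier; relative to it the argument above is essentially formal, and I do not anticipate a genuine obstacle. The only points demanding a little care are the identification $\ell^1(\Z/n\Z) = (\Z/n\Z)^\N$ (so that the dichotomy comes out phrased in terms of products) and the reduction from an arbitrary modulus $n$ to a prime divisor. The further assertion that the basis $\{\ell^1(\Z)\} \cup \{(\Z/p\Z)^\N : p \text{ prime}\}$ is irreducible is handled separately in \Cref{abelian}.
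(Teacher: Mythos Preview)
Your proposal is correct and follows essentially the same approach as the paper: the paper routes through the intermediate \Cref{pid} for general PIDs and then specializes to $R=\Z$, while you apply \Cref{discrete} directly to $\Z$ and carry out by hand the reduction from an arbitrary ideal $(n)$ to a prime ideal $(p)$ via the injection $\Z/p\Z \hookrightarrow \Z/n\Z$, $r \mapsto r\cdot(n/p)$---which is exactly the map $r \mapsto rs$ used in the proof of \Cref{pid}. The identification $\ell^1(\Z/n\Z) = (\Z/n\Z)^\N$ is stated in the paper for any finite ring, and your justification of the product topology is more detailed than what the paper provides.
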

Related statements have been shown by Solecki,
see \cite[Proposition 1.3, Theorem 1.7]{Sol99}.

The theorems in \Cref{proofs}
will be shown for a substantially broader class of modules.
In order to contextualize this,
we remark that considering even very basic module homomorphisms
(for example, the inclusion of $\Q$ into $\R$ as $\Q$-vector spaces)
naturally leads us to consider the broader class of quotients
of Polish modules by sufficiently definable submodules.
Such quotient modules are in general not Polish
(they are not necessarily even standard Borel)
but are still important objects of descriptive set-theoretic interest.
They play a crucial role in \cite{BLP20}
in the form of ``groups with a Polish cover'',
and they also form some of the most classical examples
of countable Borel equivalence relations
(for example,
the commensurability relation on the positive reals
naturally comes equipped with an abelian group structure).
The embedding order on quotient modules will be defined
analogously to the homomorphism reductions for Polish groups
studied in \cite{Ber14, Ber18}.

\subsection*{Acknowledgments}
We would like to thank Alexander Kechris,
S{\l}awomir Solecki,
and Todor Tsankov
for several helpful comments and remarks.
We would also like to thank the anonymous referee
for finding an error in an earlier draft,
as well as numerous helpful improvements.

\section{Polish modules}
Most Polish modules which cannot be written as direct sums,
even over a field.
This will follow from a more general statement about Polish groups.

Given a Polish group $G$,
a family $(H_x)_{x\in \R}$ of subgroups of $G$ is
\begin{enumerate}[label=(\roman*)]
    \item \textbf{analytic}
        if the set $\{(g, x) \in G\times \R : g \in H_x\}$ is analytic;
    \item \textbf{independent}
        if for every finite $F\subseteq \R$ and every $x \in \R\setminus F$,
        we have $H_x \cap \ev{H_y}_{y\in F} = 1$;
    \item \textbf{generating}
        if $(H_x)_{x\in \R}$ generates $G$.
\end{enumerate}
In particular,
if $G$ is the direct sum or the free product of $(H_x)_{x\in \R}$,
then $(H_x)_x$ is an independent generating family.

\begin{prop}
    Let $G$ be a Polish group,
    and let $(H_x)_{x\in \R}$ be an analytic independent
    generating family of subgroups of $G$.
    Then there are only countably many $x \in \R$ with $H_x$ nontrivial,
    and only finitely many $x \in \R$ with $H_x$ uncountable.
\end{prop}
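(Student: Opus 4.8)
The plan is to extract a single mechanism from the hypotheses — the Baire category theorem together with Pettis's lemma — and apply it twice. First I would record the only use I make of independence: if $g\neq 1$, then $g$ lies in at most one $H_x$, since $g\in H_x\cap H_y$ with $x\neq y$ would give $g\in H_x\cap\langle H_y\rangle=1$. Consequently $T:=\{x\in\R:H_x\neq 1\}$ is the image under the projection $G\times\R\to\R$ of the analytic set $\{(g,x):g\neq 1,\ g\in H_x\}$, hence is itself analytic; this is what will let me locate Cantor sets inside it. I also note the elementary fact that for pairwise disjoint finite $F,F'\subseteq\R$ one has $H_F\cap H_{F'}=1$, where $H_F:=\langle H_x:x\in F\rangle$ (for abelian $G$ this is immediate from the unique finite-support expansions supplied by independence; in general one argues by induction on $|F\cup F'|$).

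The heart of the matter is the case where the index set is countable, say the family is $(H_n)_{n\in\N}$; this case already yields the ``finitely many uncountable'' conclusion. Put $K_m:=\langle H_0,\dots,H_m\rangle$. These subgroups increase with union $G$, so by the Baire category theorem some $K_{m_0}$ is non-meager; being analytic it has the Baire property, so by Pettis's lemma $K_{m_0}=K_{m_0}K_{m_0}^{-1}$ contains a neighbourhood of $1$. Thus $K_{m_0}$ is an open, hence closed, subgroup; since $G$ is separable it has only countably many cosets; and for $n>m_0$ independence gives $H_n\cap K_{m_0}=1$, so $h\mapsto hK_{m_0}$ embeds $H_n$ into the countable coset space. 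Hence $H_n$ is countable whenever $n>m_0$, so only finitely many of the $H_n$ are uncountable.

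To obtain the ``countably many nontrivial'' conclusion I would reduce to the previous paragraph. Suppose for contradiction that $T$ is uncountable; being analytic, it contains a Cantor set $P$. Partition $P=\bigsqcup_{n\in\N}P_n$ into uncountable Borel pieces and set $A_n:=\langle H_x:x\in P_n\rangle$ for $n\geq 0$ and $A_{-1}:=\langle H_x:x\notin P\rangle$. Since each $P_n$ and $\R\setminus P$ is Borel, each $A_n$ is analytic (it is the subgroup generated by the analytic set $\{g:\exists x\in P_n,\ g\in H_x\}$); the family $(A_n)_{n\geq-1}$ evidently generates $G$; it is independent by the disjoint-intersection fact recorded above (the $P_n$, and $\R\setminus P$, being pairwise disjoint); and for $n\geq 0$ the group $A_n$ is uncountable, because $P_n\subseteq T$ forces $\bigcup_{x\in P_n}H_x$ to be uncountable. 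Applying the countable-index case to $(A_n)_{n\geq-1}$ then shows that only finitely many $A_n$ are uncountable, contradicting the fact that $A_0,A_1,A_2,\dots$ are all uncountable. Hence $T$ is countable; and applying the countable-index case once more to $(H_x)_{x\in T}$ completes the ``finitely many uncountable'' claim as well.

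The step I expect to be the crux is the reduction in the third paragraph, and more precisely the independence of the derived family $(A_n)$ — equivalently the disjoint-intersection fact for the $H_F$'s. For Polish modules, which is the setting the paper actually uses (all the applications are to abelian groups), this is transparent via the support/projection maps and the argument above is complete as it stands. For a general Polish group one cannot hope for $H_F\cap H_{F'}=H_{F\cap F'}$ when $F$ and $F'$ overlap — the standard generators of a closed surface group already violate it — so one must be careful to invoke the identity only for disjoint index sets; and, relatedly, when a countable sub-family is extracted one has to bridge the gap between generating $G$ and merely generating a dense subgroup of $G$, which I would handle by rerunning the open-subgroup/coset argument inside the clopen subgroup that Pettis's lemma produces.
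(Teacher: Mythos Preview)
Your countable-index argument (the second paragraph) is fine and is exactly the second half of the paper's proof: some $\langle H_x\rangle_{x\in F}$ is non-meager, hence open by Pettis, hence has countable index, so independence forces every $H_x$ with $x\notin F$ to be countable.

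The gap is the ``disjoint-intersection fact'' you invoke to pass from independence of $(H_x)_x$ to independence of the grouped family $(A_n)_n$. The implication
\[
  \bigl(\forall x\ \forall\text{ finite }F\not\ni x:\ H_x\cap H_F=1\bigr)
  \ \Longrightarrow\
  \bigl(\forall\text{ disjoint finite }F,F':\ H_F\cap H_{F'}=1\bigr)
\]
is \emph{false} for non-abelian $G$, and the counterexample is precisely the surface group you mention---but for disjoint, not overlapping, index sets. In the genus-$2$ group $\langle a,b,c,d\mid [a,b][c,d]=1\rangle$ take $H_1=\langle a\rangle$, $H_2=\langle b\rangle$, $H_3=\langle c\rangle$, $H_4=\langle d\rangle$. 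Abelianization shows each $H_i$ meets the subgroup generated by the other three only in $1$, so the family is independent in the paper's sense; yet with $F=\{1,2\}$ and $F'=\{3,4\}$ disjoint one has $1\neq [a,b]=[c,d]^{-1}\in H_F\cap H_{F'}$. Thus the induction you gesture at cannot go through, and the derived family $(A_n)_n$ need not be independent. Your argument is complete only in the abelian case.

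The paper sidesteps this by never grouping the $H_x$'s. Instead it looks at the ``length'' sets $A_n=\{h_0\cdots h_{n-1}:h_i\in\bigcup_x H_x\}$, which are analytic; Baire category plus Pettis gives some $A_n$ with nonempty interior, so countably many right translates $A_ng_k$ cover $G$. If uncountably many $H_x$ were nontrivial, pick a nontrivial $h_x\in H_x$ for each, partition these $x$'s into blocks of size $n{+}1$, and form the corresponding products of length $n{+}1$; by pigeonhole two such products from disjoint blocks land in the same $A_ng_k$, so their quotient---a word of length $2n{+}2$ on $2n{+}2$ distinct indices---lies in $A_{2n}$. Now at least one index, say $x_0$, is absent from the length-$2n$ rewriting, and solving for $h_{x_0}$ exhibits it as a word in $H_y$'s with $y\neq x_0$, contradicting independence directly. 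This pigeonhole-on-lengths device uses only the one-variable form of independence and is what replaces your disjoint-intersection lemma.
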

\begin{proof}
    Let $A_n$ be the set of $g\in G$
    which can be written in the form $h_0 h_1 \cdots h_{n-1}$
    with each $h_i$ in some $H_x$.
    Then $A_n$ is analytic,
    and thus Baire-measurable.
    Since $G = \bigcup_n A_n$,
    there is some $A_n$ which is non-meager.
    By Pettis's lemma,
    we can replace $n$ with $2n$
    and assume that $A_n$ has non-empty interior.
    Thus $G$ can be covered by countably many right translates
    $(A_n g_k)_k$ of $A_n$.
    
    Let $X \subseteq \R$ be the set of $x \in \R$ with $H_x$ nontrivial,
    and suppose that $X$ is uncountable.
    For each $x \in X$,
    fix some nontrivial $h_x \in H_x$.
    Fix an equivalence relation $E$ on $X$
    with every class of cardinality $n + 1$.
    Then there must be two $E$-classes
    $(x_i)_{i \le n}$ and $(y_i)_{i \le n}$
    such that $h_{x_0} h_{x_1}\cdots h_{x_n}$
    and $h_{y_0} h_{y_1}\cdots h_{y_n}$
    are in the same $A_n g_k$.
    But then
    \[
        h_{x_0} h_{x_1}\cdots h_{x_n}
        (h_{y_0} h_{y_1}\cdots h_{y_n})^{-1}
        \in A_n g_k (A_n g_k)^{-1}
        = A_{2n},
    \]
    which is a contradiction by independence.
    Thus $X$ is countable.
    
    Now $G = \bigcup_F \ev{H_x}_{x \in F}$,
    where the union is taken over all finite $F \subseteq X$,
    so since $X$ is countable,
    there is some $F$ for which
    $H_F := \ev{H_x}_{x \in F}$ is non-meager,
    and thus open,
    since $H_F$ is analytic.
    Then $G/H_F$ is countable,
    so if $x\notin F$,
    then $H_x$ is countable by independence.
\end{proof}
In particular,
this implies an unpublished result of Ben Miller
showing that an uncountable-dimensional Polish vector space
does not have an analytic basis.

If $M \sqsubseteq^R N$ and $N \sqsubseteq^R M$,
then we say that $M$ and $N$ are \textbf{bi-embeddable}.
Note that if $M$ and $N$ are $R$-modules,
and $S$ is a subring of $R$,
then $M \sqsubseteq^R N$ implies $M \sqsubseteq^S N$.
In particular,
if $M$ and $N$ are $\sqsubseteq^S$-incomparable,
then they are $\sqsubseteq^R$-incomparable.
In general,
the preorder $\sqsubseteq^R$ can contain incomparable elements.
For example,
$\R$ is $\sqsubseteq^\Z$-incomparable
with the $p$-adic rationals $\Q_p$,
for any prime $p$.
To see this,
we have $\R \not \sqsubseteq^\Z \Q_p$
since $\R$ is connected,
but $\Q_p$ is totally disconnected.
On the other hand,
$\Q_p \not \sqsubseteq^\Z \R$
since $\Q_p$ has a nontrivial compact subgroup,
but $\R$ does not.
So $\R$ and $\Q_p$ are $\sqsubseteq^\Z$-incomparable,
and thus also $\sqsubseteq^\Q$-incomparable.

For certain rings,
no locally compact module embeds into $R^\N$,
and thus a minimum for $\sqsubseteq^R$ cannot be locally compact:
\begin{prop}\label{locallyCompact}
    Let $R$ be a Polish ring
    with no nontrivial compact subgroups,
    and let $M$ be a locally compact Polish $R$-module.
    If $M \sqsubseteq^R R^\N$,
    then $M$ is countably generated.
\end{prop}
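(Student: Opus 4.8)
The plan is to use the locally compact structure of $M$ together with the hypothesis that $R$ (hence $R^\N$) has no nontrivial compact subgroups. First I would fix a continuous linear injection $\varphi \colon M \hra R^\N$ and a compact identity neighborhood $K \subseteq M$. Since $M$ is a locally compact Polish group, it has an open, $\sigma$-compact subgroup $M_0$; replacing $M$ by $M_0$ is harmless for the conclusion because $M/M_0$ is countable, so it suffices to show $M_0$ is countably generated as an $R$-module. Thus I may assume $M$ itself is $\sigma$-compact, say $M = \bigcup_n K_n$ with each $K_n$ compact.

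The next step is the crucial one: I claim that for each $n$, the image $\varphi(K_n)$ is contained in finitely many coordinates of $R^\N$, in the sense that the projection of $\varphi(K_n)$ to all but finitely many coordinates is trivial. Indeed, let $\pi_i \colon R^\N \to R$ be the $i$-th coordinate projection. The set $\pi_i \varphi(K_n)$ is a compact subset of $R$, but it need not be a subgroup, so I cannot immediately invoke the no-compact-subgroups hypothesis. Instead I would argue as follows: the closure of the subgroup of $R^\N$ generated by $\varphi(K_n)$, intersected with the $i$-th coordinate, would be too large to handle directly. The cleaner route is to use that $K := \varphi(K_n)$, being compact, and the coordinate projections being continuous homomorphisms, the set $\bigcap_{m} \pi_i(K - K - \cdots - K)$ ($m$-fold) stabilizes...

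Let me instead take the approach via Pontryagin-type or Birkhoff--Kakutani reasoning adapted to modules. Since $M$ is $\sigma$-compact and locally compact, and $\varphi$ is injective and continuous, for $\varphi$ to be far from proper we examine $\overline{\varphi(M)}$. A cleaner formulation: it suffices to find finitely many elements $m_1,\dots,m_k \in M$ whose $R$-span is all of $M$. Since $M = \bigcup_n K_n$, it is enough to show each $K_n$ lies in a finitely generated submodule. Fix $n$ and let $L \le M$ be the closure of the submodule generated by $K_n$; then $L$ is a $\sigma$-compact, in fact compactly generated, closed submodule, and $\varphi \restriction L$ is a continuous linear injection into $R^\N$. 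So without loss of generality $M$ is compactly generated, generated by a single compact set $K$. Now $\overline{\varphi(M)}$ is a closed subgroup of $R^\N$; I would show that $\varphi(M)$ lies in a finite subproduct $R^k \times \{0\} \times \cdots$. For this, consider $N := \{i \in \N : \pi_i(\varphi(M)) \ne 0\}$; each $\pi_i \circ \varphi \colon M \to R$ is a continuous module homomorphism, and $\pi_i(\varphi(K))$ is a compact subset of $R$ containing $0$; the subgroup it generates has compact closure iff... and here the no-compact-subgroups hypothesis forces $\pi_i(\varphi(K))$ to generate an \emph{unbounded} (noncompact) subgroup whenever it is nontrivial — but that is automatic and not yet a contradiction. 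The real point must be that infinitely many nontrivial coordinates, together with the injectivity of $\varphi$ and compactness of $K$, contradict the no-compact-subgroups hypothesis on $R^\N$ restricted to the \emph{kernel directions}.

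Here is the argument I would actually write. Suppose $\varphi(M)$ has infinitely many nontrivial coordinates; by passing to a subsequence of coordinates we may assume every coordinate of $\varphi(M)$ is nontrivial. Since $M$ is compactly generated by $K$ and $\varphi$ is continuous, $\varphi(K)$ is compact, so $\sup_{m \in K}|\pi_i(\varphi(m))| =: c_i < \infty$ for each $i$, and moreover by continuity of $\varphi$ into $R^\N$ we get $c_i \to 0$. Now consider the compact set $C := \varphi(K) \cdot R$-span... the key is: let $V = \varphi(M) \cap \prod_i B_i$ where $B_i$ is a closed ball in $R$; this is a compact subgroup of $R^\N$ provided... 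Actually, I will use: the preimage $\varphi^{-1}$ of the compact subgroup $\prod_i \{r : |r| \le c_i\}$ — no, that product need not be a subgroup either. The honest main obstacle is precisely this: transferring ``no compact subgroups'' from $R$ (a statement about subgroups) to a statement controlling compact \emph{subsets} under the projections, or equivalently showing that a compactly generated, closed submodule of $R^\N$ living in infinitely many coordinates must contain a nontrivial compact subgroup. I expect the right tool is: take the intersection $\bigcap_{j} \overline{\varphi(M) + F_j}$ where $F_j$ ranges over finite subproducts, or use a diagonal/compactness argument to produce a nonzero element $x \in \varphi(M)$ generating a bounded (hence precompact, hence by completeness compact) cyclic subgroup living in the ``tail'' coordinates, contradicting that $R^\N$, like $R$, has no nontrivial compact subgroups. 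Once $\varphi(M)$, and hence $\varphi(K_n)$ for every $n$, is confined to a fixed finite subproduct $R^k$, injectivity of $\varphi$ gives that $M$ itself embeds continuously and injectively into $R^k$; then $M$ is countably (indeed finitely, once we account for the earlier reduction) generated over $R$ because $R^k$ is Noetherian over... no — rather, $M \hra R^k$ as topological modules and $R^k$ is itself finitely generated, and a locally compact submodule of $R^k$ need not be closed, but its closure $\overline{M} \le R^k$ is a finitely generated $R$-module (as $R^k$ is, provided $R$ is Noetherian) — if $R$ is not assumed Noetherian, instead observe $\overline{M}$ is a closed submodule of $R^k$ hence $\sigma$-compact and, crucially, $M$ dense in it, and argue directly that $\overline M$ and hence $M$ is countably generated by a countable dense subset together with the finitely many standard generators of $R^k$. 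I would finish by noting $M$ is countably generated as claimed. The main obstacle, to reiterate, is the middle step: ruling out infinitely many nontrivial coordinates using only ``$R$ has no nontrivial compact subgroups,'' for which I would produce, from a compact generating set spread across infinitely many coordinates, a nontrivial element of finite order or of precompact cyclic span in a tail coordinate of $R^\N$.
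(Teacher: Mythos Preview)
Your central strategy---showing that $\varphi(M)$ is supported on only finitely many coordinates of $R^\N$---is simply false, and this is the genuine gap. Take $R=\R$, $M=\R$, and $\varphi(x)=(x,\,x/2,\,x/3,\,\ldots)$: this is a continuous linear injection $\R\hra\R^\N$, yet every coordinate of $\varphi(M)$ is nontrivial. So no amount of work with compact generating sets or tail projections will confine the image to a finite subproduct, and the long paragraph wrestling with that obstacle is chasing an unreachable conclusion. Your final step is also shaky: even granting an embedding $M\hra R^k$, you never explain why a locally compact submodule of $R^k$ must be countably generated without a Noetherian hypothesis on $R$.

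The paper's argument avoids all of this by looking not at the coordinates of the image but at the \emph{kernels} of the truncated projections. Set $M_n=\ker(\pi_n\circ\varphi)$, a decreasing sequence of closed submodules with trivial intersection. Fix $\ve>0$ so that the closed $\ve$-ball in $M$ is compact, and let $C=\{m\in M:\tfrac{\ve}{2}\le\|m\|\le\ve\}$. Since $C$ is compact and $C\cap\bigcap_n M_n=\varnothing$, there is some $n$ with $C\cap M_n=\varnothing$. Now the no-compact-subgroups hypothesis is used exactly once, and cleanly: $M$ inherits it from $R^\N$, so for any nonzero $m\in M_n$ the cyclic group $\langle m\rangle$ is unbounded; if $\|m\|<\tfrac{\ve}{2}$ then some multiple $km$ lands in the annulus $C$, contradicting $C\cap M_n=\varnothing$. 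Hence $M_n$ is discrete, so countable, and $M$ is generated by $M_n$ together with finitely many elements mapping to generators of the image in $R^n$. The idea you were circling---producing an element with precompact cyclic span in the tail---is essentially the contrapositive of this discreteness step, but it only works once you frame it via the kernels $M_n$ rather than the coordinate supports.
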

\begin{proof}
    Fix a continuous linear injection $f \colon M \hra R^\N$.
    Since $R$ has no nontrivial compact subgroups,
    the same holds for $R^\N$,
    and thus for $M$.
    Fix a complete norm $\|\cdot\|$ compatible with $(M, +)$.
    Let $\pi_n \colon R^\N \to R^n$ denote
    the projection to the first $n$ coordinates,
    and let $M_n = \ker(\pi_n \circ f)$,
    which is a closed submodule of $M$.
    Fix $\ve$ such that the closed $\ve$-ball around $0 \in M$ is compact,
    and let $C = \{m \in M : \frac{\ve}{2} \le \|m\| \le \ve\}$.
    Then $C \cap \bigcap_n M_n = \varnothing$,
    so since $C$ is compact,
    there is some $n$ such that $C \cap M_n = \varnothing$.
    We claim that $M_n$ is discrete.
    To see this,
    suppose that the $\frac{\ve}{2}$-ball around $0 \in M$
    contained some nonzero $m \in M_n$.
    Then the subgroup generated by $m$ is not compact,
    so there is a minimal $k \in \N$ with $\|km\| \ge \frac{\ve}{2}$,
    and hence $km \in C$,
    which is not possible.
    Thus $M_n$ is countable,
    so if we pick preimages $(m_i)_{i < n}$ in $M$
    of the standard basis of $R^n$,
    then $M$ is generated by $M_n \cup (m_i)_{i < n}$,
    and thus countably generated.
\end{proof}
We do not know anything about the preorder $\sqsubseteq^R$
restricted to locally compact modules,
including the existence of a minimum or maximum element.

If $M_0$ and $M_1$ are Polish $R$-modules
with Baire-measurable submodules $N_0$ and $N_1$ respectively,
we write $M_0/N_0 \sqsubseteq^R M_1/N_1$
if there is a continuous linear map $M_0 \to M_1$
which descends to an injection $M_0/N_0 \hra M_1/N_1$.
This map is a Borel reduction
of $E^{M_0}_{N_0}$ to $E^{M_1}_{N_1}$,
where $E^{M_i}_{N_i}$
is the coset equivalence relation of $N_i$ in $M_i$
(see \cite{Gao09} for background on Borel reductions).
In particular,
we have $\R/\Q \not\sqsubseteq^\Q \R$,
since $E^\R_\Q$ is not smooth.
We also have $\R \not\sqsubseteq^\Q \R/\Q$,
since any nontrivial continuous linear map $\R \to \R$ is surjective,
and thus $\R$ and $\R/\Q$ are $\sqsubseteq^\Q$-incomparable.

\section{Proper normed rings}
\label{ProperNorm}
Every proper normed ring is locally compact and Polish.
There are many examples of proper normed rings:
\begin{itemize}
    \item The usual norms on $\Z$, $\R$, $\C$ and $\HH$ are proper.
    \item The $p$-adic norm on $\Q_p$ is proper.
    \item Every countable ring $R$
        admits a proper norm as follows.
        Let $w \colon R \to \N$ be a finite-to-one function
        such that $w(0) = 0$,
        $w(r) \ge 2$ if $r \neq 0$,
        and $w(r) = w(-r)$.
        We extend $w$ to every term $t$
        in the language $({+}, {\cdot}) \cup R$
        by $w(r + s) = w(r) + w(s)$ and $w(r \cdot s) = w(r)w(s)$.
        Then let $|r|$ be the minimum of $w(t)$
        over all terms $t$ representing $r$.
    \item Let $R$ be a proper normed ring.
        If $S \le R$ is a closed subring,
        then there is a proper norm on $S$
        obtained by restricting the norm on $R$.
        If $I \tl R$ is a closed two-sided ideal,
        then there is a proper norm on $R/I$
        given by $|r + I| = \min_{s \in r + I} |s|$.
\end{itemize}
In general,
we do not know if every locally compact Polish ring
admits a compatible proper norm.

Given a closed two-sided ideal $I \tl R$,
there is a natural quotient map
$\ell^1(R) \thra \ell^1(R/I)$
with kernel $\ell^1(I) := \ell^1(R)\cap I^\N$.

If $R$ is finite proper normed ring,
then $\ell^1(R) = R^\N$,
which in particular is homeomorphic to Cantor space.
For infinite discrete rings,
there is also a unique homeomorphism type.
Recall that \textbf{complete Erd\H{o}s space} is
the space of square-summable sequences of irrational numbers
with the $\ell^2$-norm topology.
\begin{prop}\label{l1Erdos}
    Let $R$ be an infinite discrete proper normed ring.
    Then $\ell^1(R)$ is homeomorphic to complete Erd\H{o}s space.
\end{prop}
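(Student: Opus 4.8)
The plan is to identify $\ell^1(R)$ as a topological space with complete Erdős space $\mathfrak{E}_c$ using a known topological characterization rather than building an explicit homeomorphism. The key tool is the Dijkstra--van Mill characterization: a space is homeomorphic to complete Erdős space if and only if it is a nonempty, nowhere zero-dimensional, almost zero-dimensional Polish space which is ``cohesive'' (in fact it suffices to exhibit it as an appropriate subspace of a product witnessing these properties; see the Dijkstra--van Mill work on Erdős space and its characterizations). So first I would record that $\ell^1(R)$ is Polish (already noted in the excerpt, since $\norm{\cdot}$ is a complete separable norm) and nonempty.

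Next I would verify almost zero-dimensionality: the topology of $\ell^1(R)$ has a basis of sets that are closed in a coarser zero-dimensional (indeed, since $R$ is discrete, $R^\N$ is zero-dimensional) topology. Concretely, the norm topology on $\ell^1(R)$ is finer than the subspace topology inherited from the product $R^\N$, which is zero-dimensional because $R$ is discrete; and the closed balls of the norm, intersected with basic clopen cylinders of $R^\N$, form a basis for $\ell^1(R)$ consisting of sets closed in the product topology. This is the standard way $\ell^1$-type sequence spaces are shown to be almost zero-dimensional, and the argument for complete Erdős space itself (square-summable sequences of irrationals) is the prototype.

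Then I would check that $\ell^1(R)$ is nowhere zero-dimensional, equivalently (for an almost zero-dimensional space) that it is cohesive: every point has a neighborhood that contains no nonempty clopen subset of the whole space. This is where infiniteness of $R$ is essential. The idea is that given any point $x$ and any small norm-ball around it, one can ``wiggle'' a single coordinate $r_k$ through infinitely many values of $R$ of bounded norm (here one uses that $R$ is infinite, so it contains elements of arbitrarily large norm — hence also, after rescaling by $1/k!$ with $k$ large, infinitely many elements whose contribution to the norm is below any prescribed threshold) to connect $x$ within the ball to points realizing every such coordinate value; this rules out the ball splitting off a clopen piece. Dually, for a finite ring $R^\N$ is compact and zero-dimensional, so this is exactly the place the hypothesis is used. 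I expect this cohesion/nowhere-zero-dimensionality step to be the main obstacle, since it requires a genuine argument rather than a routine verification; the rest is bookkeeping with the definitions of the norm and the product topology.

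Finally, having verified that $\ell^1(R)$ is a nonempty, cohesive, almost zero-dimensional Polish space, I would invoke the topological characterization of complete Erdős space to conclude $\ell^1(R) \cong \mathfrak{E}_c$.
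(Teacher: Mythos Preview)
Your overall strategy coincides with the paper's: both invoke the Dijkstra--van Mill characterization and take the product topology on $R^\N$ (zero-dimensional since $R$ is discrete) as the witnessing coarser topology, and both observe that closed norm-balls are closed (hence Polish) in the product topology. The divergence is in the remaining, decisive step. The paper works with the precise form of the characterization it quotes (Theorem~\ref{DvM}) and verifies that closed balls are \emph{nowhere dense in the product topology}: inside any basic cylinder $\{x : x_i = r_i \text{ for } i < n\}$ one exhibits a point outside $B_\ve$ by choosing $r \in R$ with $|r| > n!\,\ve$ (such $r$ exists exactly because $R$ is infinite and the norm is proper) and taking $(r_0,\ldots,r_{n-1},r,0,0,\ldots)$.

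Your sketch of the corresponding ``cohesion'' step points in the opposite direction and, as written, is not an argument. You propose to use that for large $k$ there are many $r$ with $|r|/k!$ \emph{small}, so as to ``connect'' $x$ to nearby points by wiggling a coordinate; but $\ell^1(R)$ is totally disconnected, so ``connect'' has no clear content here, and producing many points \emph{inside} a ball does not by itself preclude a nonempty clopen set from sitting inside that ball. (Note also that ``infinitely many values of $R$ of bounded norm'' is false: properness of the norm forces bounded subsets of $R$ to be finite.) The idea that actually works is the dual one used in the paper: elements of \emph{large} norm let you escape any ball while remaining in any prescribed product-open set, showing the ball has empty $\tau$-interior. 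Finally, you should pin down the exact form of the characterization you intend to invoke; the paper relies on a specific statement (Theorem~\ref{DvM}), whereas ``nonempty, Polish, almost zero-dimensional, cohesive'' is not self-evidently an equivalent formulation.
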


To show this,
we will use a characterization due to Dijkstra and van Mill
\cite[Theorem 1.1]{DvM09}.
A topological space is \textbf{zero-dimensional}
if it is nonempty and it has a basis of clopen sets.
\begin{thm}[Dijkstra-van Mill]\label{DvM}
    Let $X$ be a separable metrizable space.
    Then $X$ is homeomorphic to complete Erd\H{o}s space iff
    there is a zero-dimensional metrizable topology $\tau$ on $X$
    coarser than the original topology
    such that every point in $X$ has a neighbourhood basis
    (for the original topology)
    consisting of closed nowhere dense Polish subspaces of $(X, \tau)$.
\end{thm}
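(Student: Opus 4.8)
The plan is to verify the Dijkstra--van Mill criterion (\Cref{DvM}) for $X = \ell^1(R)$ equipped with its $\ell^1$-norm topology. Since $R$ is discrete and proper normed, every closed ball $\{r \in R : |r| \le c\}$ is compact, hence finite; I would first record the consequences of this together with $R$ being infinite: namely that $R$ is countably infinite, that the norm is unbounded on $R$, and that $\mu := \inf\{|r| : r \in R,\ r \neq 0\} > 0$ (the unit ball is finite, so its nonzero norms have a positive minimum). Because $\ell^1(R)$ is a Polish module it is separable metrizable, so \Cref{DvM} applies once I exhibit a suitable coarser topology $\tau$ and the required neighbourhood bases.

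For $\tau$ I would take the product (coordinatewise) topology that $\ell^1(R)$ inherits from $R^\N$, where $R$ carries its discrete topology. Since $R$ is countably infinite and discrete, $R^\N$ is homeomorphic to Baire space and is in particular zero-dimensional and Polish, so its nonempty subspace $(\ell^1(R),\tau)$ is zero-dimensional and metrizable. That $\tau$ is coarser than the norm topology reduces to continuity of each coordinate projection $\pi_k \colon \ell^1(R) \to R$: if $\norm{y - y'} < \mu/k!$ then $|y_k - y'_k| < \mu$, forcing $y_k = y'_k$ by the choice of $\mu$.

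It then remains to produce, at each point $x$, a neighbourhood basis (for the norm topology) of sets that are closed, nowhere dense, and Polish in $(\ell^1(R),\tau)$. The candidates are the closed norm balls $C_r(x) = \{y \in \ell^1(R) : \norm{y - x} \le r\}$ for $r > 0$, which visibly form a neighbourhood basis at $x$. The key observation is that the function $f(y) = \sum_k |y_k - x_k|/k!$, considered on all of $R^\N$, is lower semicontinuous for the product topology, being the increasing supremum of the continuous partial sums $y \mapsto \sum_{k \le n} |y_k - x_k|/k!$ (each depending on finitely many discrete coordinates). Hence $C_r(x) = \{y \in R^\N : f(y) \le r\}$ is a closed subset of the Polish space $R^\N$ contained in $\ell^1(R)$, so it is Polish in $\tau$ and closed in $(\ell^1(R),\tau)$.

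Finally, nowhere-density is where infinitude of $R$ is essential: given $y \in C_r(x)$ and any basic $\tau$-neighbourhood $V$ of $y$, which constrains only finitely many coordinates, I would choose an unconstrained coordinate $m$ and, using that $\{s \in R : |s - x_m| \le c\}$ is finite for every $c$, replace $y_m$ by some $s$ making $|s - x_m|/m!$ large enough that the new point lies in $V$ but leaves $C_r(x)$. Thus $C_r(x)$ has empty $\tau$-interior and is $\tau$-nowhere dense, and \Cref{DvM} delivers the conclusion. I expect no single step to be deep; the main obstacle is getting the interplay of the two topologies exactly right, with the semicontinuity of $f$ doing the work for closedness and Polishness and the infinitude of $R$ doing the work for nowhere-density.
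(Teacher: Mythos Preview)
Your proposal does not prove the stated theorem. \Cref{DvM} is the Dijkstra--van Mill characterization of complete Erd\H{o}s space itself, which the paper does not prove but merely quotes from \cite{DvM09}; there is no ``paper's own proof'' to compare against. What you have written is instead a proof of \Cref{l1Erdos} (that $\ell^1(R)$ is homeomorphic to complete Erd\H{o}s space when $R$ is an infinite discrete proper normed ring), carried out by \emph{applying} the Dijkstra--van Mill criterion rather than establishing it.

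If the intended target was \Cref{l1Erdos}, then your argument is correct and follows essentially the same route as the paper: both take $\tau$ to be the product topology inherited from $R^\N$, and both verify that closed norm balls form the required neighbourhood bases by checking they are $\tau$-closed, $\tau$-Polish, and $\tau$-nowhere dense. You add detail the paper omits---the explicit verification that $\tau$ is coarser via the positive minimum norm $\mu$, and the lower-semicontinuity phrasing for closedness---while the paper simplifies by translating to balls centered at $0$; but the substance is identical.
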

\begin{proof}[Proof of \Cref{l1Erdos}]
    We check the condition from \Cref{DvM}.
    Let $\tau$ be the product topology on $R^\N$,
    which is zero-dimensional and metrizable.
    It is enough to show that every closed ball is
    a closed nowhere dense Polish subspace of $(\ell^1(R), \tau)$.
    By translation,
    it suffices to consider balls of the form
    $B = \{m \in \ell^1(R) : \|m\| \le \ve\}$.
    Note that $B$ is closed in $R^\N$.
    Thus $(B, \tau)$ is Polish,
    and $B$ is closed in $(\ell^1(R), \tau)$.
    It remains to show that the complement of $B$ is dense in $(\ell^1(R), \tau)$.
    Let $U$ be a nonempty open subset of $(\ell^1(R), \tau)$.
    We can assume that there is a finite sequence $(r_k)_{k < n}$ in $R$
    such that $U$ is the set of sequences in $\ell^1(R)$
    starting with $(r_k)_{k < n}$.
    Since $R$ is infinite and the norm is proper,
    there is some $r \in R$ with $|r| > n!\ve$.
    Then $(r_0, \ldots, r_{n-1}, r, 0, 0, 0, \ldots) \in U\setminus B$.
\end{proof}

\section{Special cases}
For a general Polish ring $R$,
we do not know much about the preorder $\sqsubseteq^R$,
including the following:
\begin{prob}
    Is there a maximum Polish $R$-module under $\sqsubseteq^R$?
\end{prob}
This is known for some particular rings,
which we mention below.

\subsection{Principal ideal domains}
Recall that a \textbf{principal ideal domain (PID)}
is an integral domain in which every ideal is generated by a single element.
There is an irreducible basis for uncountable Polish modules over a PID:
\begin{thm}\label{pid}
    Let $R$ be a proper normed discrete PID
    and let $M$ be a Polish $R$-module.
    Then exactly one of the following holds:
    \begin{enumerate}
        \item $M$ is countable.
        \item There a prime ideal $\p \tl R$ 
            such that $\ell^1(R/\p) \sqsubseteq^R M$.
    \end{enumerate}
    Moreover,
    the $\ell^1(R/\p)$ are $\sqsubseteq^R$-incomparable for different $\p$.
\end{thm}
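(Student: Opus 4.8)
The plan is to derive this as a special case of the left-Noetherian result (\Cref{discrete}). First, observe that a proper normed discrete PID $R$ is in particular left-Noetherian (every ideal is principal, hence finitely generated), so \Cref{discrete} applies: for a Polish $R$-module $M$, either $M$ is countable, or $\ell^1(S) \sqsubseteq^R M$ for some nonzero quotient $S = R/I$ of $R$. Since these two options are mutually exclusive (an embedding $\ell^1(S) \sqsubseteq^R M$ forces $M$ to be uncountable, as $\ell^1(S)$ is uncountable when $S \neq 0$), it remains only to upgrade "some nonzero quotient $S$" to "$\ell^1(R/\p)$ for some prime ideal $\p$". The key algebraic input is that in a PID, every nonzero quotient $R/I$ surjects onto $R/\p$ for some prime ideal $\p \supseteq I$: writing $I = (a)$ with $a \neq 0$ (the case $I = 0$ gives $S = R$, and one takes $\p = 0$, which is prime since $R$ is a domain), factor $a$ into primes and take any prime factor $\pi$; then $\p = (\pi) \supseteq (a) = I$, so the canonical map $R/I \thra R/\p$ is a surjection of rings, hence also of $R$-modules. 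Composing the quotient map $\ell^1(R) \thra \ell^1(S)$ with the induced map $\ell^1(S) \thra \ell^1(R/\p)$ (using the construction at the end of \Cref{ProperNorm}, applied to the closed ideal $\p/I \tl S$), and noting that these quotient maps are continuous $R$-linear, we get $\ell^1(R/\p) \sqsubseteq^R \ell^1(S) \sqsubseteq^R M$, where the first embedding needs the observation that the quotient map $\ell^1(S) \thra \ell^1(R/\p)$ actually admits a continuous linear section — indeed, since $\p$ is generated by a single element $\pi$, a section $R/\p \to S$ can be chosen on coset representatives and extended coordinatewise, giving a continuous linear injection $\ell^1(R/\p) \hra \ell^1(S)$; alternatively one argues directly that $\ell^1(R/\p) \sqsubseteq^R \ell^1(R/I)$ by picking lifts. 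I would carry this out carefully since the direction of the embedding matters.

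For the "moreover" clause, the plan is to show $\ell^1(R/\p) \not\sqsubseteq^R \ell^1(R/\q)$ whenever $\p \neq \q$ are distinct prime ideals. The module-theoretic obstruction is annihilators: every element of $\ell^1(R/\q)$ is annihilated by $\q$ (acting coordinatewise, since $\q \cdot (R/\q) = 0$), so $\ell^1(R/\q)$ is a module over the ring $R/\q$. If $f \colon \ell^1(R/\p) \to \ell^1(R/\q)$ is a continuous $R$-linear injection, then for any $\pi \in \p \setminus \q$ (which exists since $\p \not\subseteq \q$, as distinct primes in a PID are incomparable unless one is $0$ — and the zero ideal case is handled separately below) and any $m \in \ell^1(R/\p)$, we have $\pi \cdot f(m) = f(\pi m)$. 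Now choose $m$ so that $\pi m \neq 0$ in $\ell^1(R/\p)$: this is possible because $\pi \notin \p$ means multiplication by $\pi$ is injective on $R/\p$ (as $R/\p$ is a domain, $\p$ being prime), hence injective on $\ell^1(R/\p)$; so $\pi m \neq 0$ for $m \neq 0$. By injectivity of $f$, $f(\pi m) \neq 0$, i.e. $\pi \cdot f(m) \neq 0$; but since $\pi \notin \q$... wait, I need $\pi$ to annihilate things, so I should instead pick $\pi \in \q \setminus \p$: then $\pi$ annihilates all of $\ell^1(R/\q)$, so $f(\pi m) = \pi f(m) = 0$, while $\pi \notin \p$ gives $\pi m \neq 0$ for $m \neq 0$, contradicting injectivity of $f$. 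For the case where one of the primes is the zero ideal, say $\q = 0$ and $\p \neq 0$: an element $\pi \in \p \setminus \{0\}$ annihilates $\ell^1(R/\p)$, so for the other direction $\ell^1(R/\p) \not\sqsubseteq^R \ell^1(R)$ we use that $R$ is a domain (nothing nonzero is annihilated), and symmetrically $\ell^1(R) \not\sqsubseteq^R \ell^1(R/\p)$ since picking $\pi \in \p$ annihilates the target but not the source.

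The main obstacle I anticipate is getting the direction of the embedding $\ell^1(R/\p) \sqsubseteq^R \ell^1(S)$ right when $S = R/I$ with $I \subsetneq \p$: \Cref{discrete} produces an embedding of $\ell^1(S)$, and I need to replace it by $\ell^1(R/\p)$, which requires a \emph{continuous linear injection} $\ell^1(R/\p) \hra \ell^1(S)$ rather than the obvious surjection $\ell^1(S) \thra \ell^1(R/\p)$. This should work by choosing, once and for all, a set-theoretic section $\sigma \colon R/\p \to S$ of the ring surjection $S \thra R/\p$ with $\sigma(0) = 0$, and noting that $S$ is discrete so $\sigma$ is automatically continuous; then $(s_k)_k \mapsto (\sigma(s_k))_k$ is a continuous additive injection $\ell^1(R/\p) \to S^\N$, and one checks it lands in $\ell^1(S)$ (using that the norm on $S$ dominates a constant multiple of the norm on $R/\p$ pulled back, or more simply that $\sigma$ is finite-to-one on balls since $S$ is discrete, so $\sum |\sigma(s_k)|/k!$ converges whenever $\sum |s_k|/k!$ does — this needs $|\sigma(s)| \le g(|s|)$ for some function $g$, which holds because there are only finitely many $s \in R/\p$ of each norm and we may pick $\sigma$ to minimize $|\sigma(s)|$); $R$-linearity of this map is slightly delicate since $\sigma$ need not be additive, so instead I would phrase the embedding via lifting a free $R/\p$-module structure, or observe that $R/\p$-linearity suffices because $\ell^1(R/\p)$ carries only an $R/\p$-action. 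I expect this point to require the most care, and I would handle it by a clean lemma: for any surjection of proper normed rings $S \thra T$ with discrete kernel... actually the cleanest route is to note $\ell^1(R/\p)$ is a quotient of $\ell^1(R)$ by a closed submodule on which there is a continuous linear section $R/\p \to R$ (coset representatives of minimal norm), yielding $\ell^1(R/\p) \sqsubseteq^R \ell^1(R) \sqsubseteq^R \ell^1(S)$ directly from a section $R \to S$? — no, $R \to S$ goes the wrong way too. The resolution is that \Cref{discrete}'s proof (which we may invoke) in the PID case actually already delivers the prime quotient, or else we accept $\ell^1(R/\p) \sqsubseteq^R \ell^1(S)$ from the section-by-minimal-representatives argument above, which is the step I would write out in full.
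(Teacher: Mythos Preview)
Your overall strategy matches the paper's: reduce to \Cref{discrete}, then upgrade the ideal $I$ to a prime $\p$, and use annihilators for incomparability. The incomparability argument is essentially the paper's, though the paper phrases it more cleanly: since $R/\p$ is a domain, every nonzero element of $\ell^1(R/\p)$ has annihilator exactly $\p$, so an $R$-linear injection into $\ell^1(R/\q)$ forces $\p = \q$.

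The genuine gap is in the step $\ell^1(R/\p) \hra \ell^1(R/I)$. You correctly identify that the obvious map goes the wrong way (it is the surjection $\ell^1(R/I) \thra \ell^1(R/\p)$), and you propose to reverse it via a set-theoretic section $\sigma \colon R/\p \to R/I$ of minimal-norm coset representatives, applied coordinatewise. But, as you yourself flag, $\sigma$ is not additive, so the resulting map $\ell^1(R/\p) \to \ell^1(R/I)$ is not a group homomorphism, let alone $R$-linear, and hence does not witness $\sqsubseteq^R$. Your fallback suggestions do not close the gap: ``$R/\p$-linearity suffices'' is true but irrelevant since $\sigma$ is not even additive; ``lift a free module structure'' and ``\Cref{discrete}'s proof already delivers a prime'' are not carried out.

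The missing idea, which is the paper's one-line fix, is to go in the other algebraic direction: rather than looking for a section of $R/I \thra R/\p$, exhibit an $R$-linear \emph{injection} $R/\p \hra R/I$. In a PID this is immediate. Writing $I = (a)$ with $a \neq 0$ and choosing a prime factor $\pi$ of $a$, set $a = \pi s$ and $\p = (\pi)$; then multiplication by $s$ gives $R/\p \hra R/I$, $r + \p \mapsto rs + I$ (well-defined since $\p s = (\pi s) = I$; injective since $rs \in (\pi s)$ forces $r \in (\pi)$ in a domain). Applying this coordinatewise yields a continuous $R$-linear injection $\ell^1(R/\p) \hra \ell^1(R/I)$, and composing with the embedding from \Cref{discrete} finishes the argument. (If $I = 0$, take $\p = 0$.)
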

\begin{proof}
    Suppose that $M$ is not countable.
    By \Cref{discrete},
    there is some proper ideal $I \tl R$ such that
    $\ell^1(R/I) \sqsubseteq^R M$.
    Then since $R$ is a PID,
    there is some prime ideal $\p \tl R$ and some nonzero $s \in R$
    such that $I = \p s$.
    Then the linear injection $R/\p \hra R/I$ defined by $r \mapsto rs$
    induces a continuous linear injection $\ell^1(R/\p) \hra \ell^1(R/I)$.
    
    It remains to show that if $\p$ and $\q$ are prime ideals
    with $\ell^1(R/\p) \sqsubseteq^R \ell^1(R/\q)$,
    then $\p = \q$.
    Fix a continuous linear injection $\ell^1(R/\p) \hra \ell^1(R/\q)$.
    Since $R/\p$ is an integral domain,
    the annihilator of any nonzero element of $\ell^1(R/\p)$ is $\p$,
    and similarly for $\q$.
    Then for any nonzero $x \in \ell^1(R/\p)$,
    its image in $\ell^1(R/\q)$ must have the same annihilator
    since the map is injective,
    and thus $\p = \q$.
\end{proof}

\subsection{Abelian groups}
Applying \Cref{pid} with $R = \Z$ gives
an irreducible basis for uncountable abelian groups:
\begin{thm}\label{abelian}
    Let $A$ be an uncountable abelian Polish group.
    Then one of the following holds:
    \begin{enumerate}
        \item $\ell^1(\Z) \sqsubseteq^\Z A$.
        \item $(\Z/p\Z)^\N \sqsubseteq^\Z A$ for some prime $p$.
    \end{enumerate}
\end{thm}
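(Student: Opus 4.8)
The plan is to specialize \Cref{pid} to the ring $R = \Z$. First I would record that $\Z$, equipped with its usual absolute value, is a discrete proper normed PID: it is an integral domain in which every ideal is principal, it is countable and discrete, and the absolute value is a proper norm since in the discrete topology every closed ball is finite, hence compact. Moreover, a Polish $\Z$-module is precisely an abelian Polish group, so $A$ is a Polish $\Z$-module. Since $A$ is uncountable, alternative (1) of \Cref{pid} fails, and therefore there is a prime ideal $\p \tl \Z$ with $\ell^1(\Z/\p) \sqsubseteq^\Z A$.

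The remaining step is a case analysis using the classification of prime ideals of $\Z$: each is either $(0)$ or $(p)$ for some prime $p$. If $\p = (0)$, then $\Z/\p = \Z$, so $\ell^1(\Z) \sqsubseteq^\Z A$, which is alternative (1). If $\p = (p)$, then $\Z/\p = \Z/p\Z$ is finite, so by the remark in \Cref{ProperNorm} that $\ell^1(S) = S^\N$ for a finite proper normed ring $S$, we have $\ell^1(\Z/p\Z) = (\Z/p\Z)^\N$, and hence $(\Z/p\Z)^\N \sqsubseteq^\Z A$, which is alternative (2). If desired, the ``moreover'' clause of \Cref{pid} shows in addition that the listed modules are pairwise $\sqsubseteq^\Z$-incomparable for distinct primes and for $(0)$ versus $(p)$, so the basis is irreducible.

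I do not expect any real obstacle here: the statement is a direct corollary of \Cref{pid}, and the only things to verify are the routine facts that $\Z$ is a discrete proper normed PID, that prime ideals of $\Z$ are $(0)$ and the $(p)$, and that $\ell^1$ of a finite ring is its full countable power, each of which is immediate from the definitions and the discussion in \Cref{ProperNorm}.
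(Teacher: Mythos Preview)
Your proposal is correct and follows exactly the paper's approach: the paper states that \Cref{abelian} is obtained by applying \Cref{pid} with $R = \Z$, and your write-up simply fills in the routine verifications (that $\Z$ is a discrete proper normed PID, the classification of its prime ideals, and that $\ell^1(\Z/p\Z) = (\Z/p\Z)^\N$).
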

By \cite{Shk99},
there is a $\sqsubseteq^\Z$-maximum abelian Polish group $A_{\max}$.
So the preorder $\sqsubseteq^\Z$ on
uncountable abelian Polish groups
looks like the following:

\[
    \begin{tikzpicture}
        \node (l1) at (0,0) {$\ell^1(\Z)$};
        \node (R) at (0.5,2) {$\R$};
        \node (Qp) at (1.5,2) {$\Q_p$};
        \node (mod2) at (2,0) {$(\Z/2\Z)^\N$};
        \node (mod3) at (4,0) {$(\Z/3\Z)^\N$};
        \node (mod5) at (6,0) {$(\Z/5\Z)^\N$};
        \node (etc) at (8,0) {$\cdots$};
        \node (dots0) at (3,2) {$\cdots$};
        \node (dots1) at (3.6,2) {$\cdots$};
        \node (dots2) at (4.2,2) {$\cdots$};
        \node (dots3) at (4.8,2) {$\cdots$};
        \node (dots4) at (5.4,2) {$\cdots$};
        \node (dots5) at (2.7,4) {$\cdots$};
        \node (dots6) at (3.3,4) {$\cdots$};
        \node (dots7) at (3.9,4) {$\cdots$};
        \node (Amax) at (3.3,6) {$A_{\max}$};
        
        \draw (l1) to (R);
        \draw (R) to (dots5);
        \draw (R) to (dots7);
        \draw (l1) to (Qp);
        \draw (Qp) to (dots5);
        \draw (Qp) to (dots7);
        \draw (l1) to (dots0);
        \draw (l1) to (dots3);
        \draw (mod2) to (dots0);
        \draw (mod2) to (dots3);
        \draw (mod3) to (dots0);
        \draw (mod3) to (dots4);
        \draw (mod5) to (dots0);
        \draw (mod5) to (dots3);
        \draw (etc) to (dots2);
        \draw (etc) to (dots4);
        \draw (dots0) to (dots5);
        \draw (dots1) to (dots6);
        \draw (dots2) to (dots6);
        \draw (dots3) to (dots7);
        \draw (dots4) to (dots7);
        \draw (dots5) to (Amax);
        \draw (dots6) to (Amax);
        \draw (dots7) to (Amax);
    \end{tikzpicture}
\]

\subsection{$\Q$-vector spaces}
Fix a proper norm on $\Q$.
By \Cref{locallyCompact},
a $\sqsubseteq^\Q$-minimum uncountable Polish $\Q$-vector space
cannot be locally compact.
By \Cref{discrete},
we have $\ell^1(\Q) \sqsubset^\Q \R$,
where the strictness is due to
$\ell^1(\Q)$ being totally disconnected.
However,
it is open as to whether there is an intermediate vector space:
\begin{prob}
    Is there a Polish $\Q$-vector space $V$ such that
    $\ell^1(\Q) \sqsubset^\Q V \sqsubset^\Q \R$?
\end{prob}

\subsection{Real vector spaces}
We consider the order $\sqsubseteq^\R$
on uncountable-dimensional Polish $\R$-vector spaces.
By \Cref{division},
there is a minimum element $\ell^1(\R)$,
which is bi-embeddable with the usual space $\ell^1$
of absolutely summable sequences.
By \Cref{locallyCompact},
any uncountable-dimensional locally compact Polish $\R$-vector space
must be strictly above $\ell^1$.
By \cite{Kal77},
there is a maximum Polish $\R$-vector space $V_{\max}$.

\section{Proof of the main theorems}
\label{proofs}
Every abelian Polish group $A$
has a compatible complete norm
defined by $\norm{a} = d(a, 0)$,
where $d$ is an invariant metric on $A$
(see \cite[1.1.1, 1.2.2]{BK96}).
If $B \subseteq A$ is a Baire-measurable subgroup,
then by Pettis's lemma,
$B$ is either open or meager
(see \cite[9.11]{Kec95}).

Setting $N = 0$ in the following theorem
recovers \Cref{division}.
\begin{thm}\label{divisionQuotient}
    Let $R$ be a proper normed division ring,
    let $M$ be a Polish $R$-vector space,
    and let $N \subseteq M$ be an $F_\sigma$ vector subspace.
    Then exactly one of the following holds:
    \begin{enumerate}[label=(\arabic*)]
        \item $\dim_R(M/N)$ is countable.
        \item $\ell^1(R) \sqsubseteq^R M/N$.
    \end{enumerate}
\end{thm}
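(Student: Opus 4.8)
The plan is as follows. First, the two alternatives are mutually exclusive: if $\ell^1(R)\sqsubseteq^R M/N$, then $M/N$ contains an abstract copy of $\ell^1(R)$, and $\dim_R\ell^1(R)$ is uncountable (it equals $2^{\aleph_0}$, since $\ell^1(R)$ is an infinite-dimensional module whose underlying set has size $2^{\aleph_0}$), so $\dim_R(M/N)$ is uncountable. So assume $\dim_R(M/N)$ is uncountable, and write $N=\bigcup_j C_j$ with $C_j$ closed, increasing, symmetric, and containing $0$; I will construct a continuous linear $f\colon \ell^1(R)\to M$ with $f^{-1}(N)=0$. Any continuous linear $f$ has the form $f((r_k)_k)=\sum_k r_k m_k$ with $m_k=f(e_k)$, and conversely: since multiplication $R\times M\to M$ is continuous and each closed ball $B_R(\rho)=\{r\in R:|r|\le\rho\}$ is compact, for all $\rho,\delta>0$ there is $\eta>0$ with $\norm{rm}<\delta$ whenever $|r|\le\rho$ and $\norm{m}\le\eta$. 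Choosing $\eta_k$ so that $\norm{rm_k}<2^{-k-2}$ whenever $|r|\le(k+1)!$, any sequence $(m_k)$ with $\norm{m_k}\le\eta_k$ defines a continuous $R$-linear $f$: for $(r_k)$ of norm $C$ we have $|r_k|\le Ck!$, so the tail $\sum_{k\ge\lceil C\rceil}r_k m_k$ is absolutely convergent, and the convergence is uniform on the unit ball of $\ell^1(R)$. Thus the whole problem reduces to choosing such $(m_k)$ making $f$ injective modulo $N$.

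Here is the key reduction. Suppose $(r_k)\in\ell^1(R)\setminus\{0\}$ has $f((r_k))\in C_j$; set $C=\lceil\norm{(r_k)}\rceil$ and $k_0=\min\operatorname{supp}((r_k))$. For every $n\ge C$ the tail estimate gives $\norm{\sum_{k\ge n}r_k m_k}\le 2^{-n-1}$, so the finite combination $\sum_{k_0\le k<n}r_k m_k$ lies within $2^{-n-1}$ of $C_j$; its coefficient tuple $(r_k)_{k_0\le k<n}$ lies in $\prod_{k_0\le k<n}B_R(Ck!)$, with leading coefficient $r_{k_0}\ne 0$. Hence it suffices to arrange, for each triple $(C,k_0,j)\in\N^3$ (when $R$ is non-discrete, one adds a fourth parameter $j'$ and requires $|r_{k_0}|\ge 1/j'$, so that the relevant sets of coefficient tuples are compact rather than merely finite), that there is a cutoff $n>\max(C,k_0)$ with
\[
    d\Big(\textstyle\sum_{k_0\le k<n}s_k m_k,\ C_j\Big)>2^{-n+1}
    \quad\text{for all } (s_k)\in\textstyle\prod_{k_0\le k<n}B_R(Ck!) \text{ with } s_{k_0}\ne 0.
\]
Applying this with $s_k=r_k$ then contradicts the previous sentence.

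The sequence $(m_k)$ is built by recursion, processing the triples $(C,k_0,j)$ one at a time, while maintaining the invariant that $m_0,m_1,\dots$ are $R$-linearly independent modulo $N$. This invariant is always maintainable: $N+\spn_R(m_0,\dots,m_{k-1})$ is an $F_\sigma$ subgroup of $M$ whose codimension is $\dim_R(M/N)$, hence uncountable, hence it is meager (an $F_\sigma$ subgroup is meager or open, and an open one has countable codimension), so its complement is dense, and one may always choose the next $m_k$ arbitrarily small (in particular $\norm{m_k}\le\eta_k$) and outside it. To process $(C,k_0,j)$: first extend the current sequence if necessary so its length $\ell$ exceeds $k_0$; the (finitely many, resp. compact family of) combinations in $P=\{\sum_{k_0\le k<\ell}s_k m_k: s_k\in B_R(Ck!),\ s_{k_0}\ne 0\}$ are nontrivial, hence lie outside $N$ by linear independence, hence a positive distance $\rho=d(P,C_j)>0$ from the closed set $C_j$; now pick $n>\ell$ with $2^{-n+1}<\rho/2$ and append $n-\ell$ further elements, each linearly independent modulo $N$ and small enough that $\sum_{\ell\le k<n}\sup_{|r|\le Ck!}\norm{rm_k}<\rho/2$, so that the displayed inequality holds. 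Since every step increases the length, all $m_k$ eventually get defined.

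The main obstacle is organizing this recursion correctly — isolating the right countable family of ``distance requirements'' (the norm-budget parameter $C$ is essential, as there is no uniform bound on coefficients over all of $\ell^1(R)$, and an infinitely supported witness is controlled only because its tail is eventually small), and checking that $R$-linear independence modulo $N$ is preserved while still leaving room to satisfy each requirement; the subtle point is precisely that a nontrivial finite combination of linearly-independent-mod-$N$ elements stays a \emph{definite} distance from the closed set $C_j$, which is what lets small later perturbations be absorbed. Setting $N=0$ recovers \Cref{division}; the cleanest instances are $R$ a finite field (where $\ell^1(R)=R^\N$ and the parameter $C$ drops out) and $R=\R$.
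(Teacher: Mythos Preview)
Your proof is correct and follows essentially the same strategy as the paper: build $(m_k)$ linearly independent modulo $N$, small enough that $(r_k)\mapsto\sum_k r_k m_k$ converges, and arranged so that every finite partial sum with nonzero leading coefficient stays a definite distance from the closed pieces $C_j$ of $N$, which the tail estimate then upgrades to $\sum_k r_k m_k\notin N$. The only organizational difference is that the paper uses the division-ring structure to \emph{normalize} (scale so the leading coefficient equals $1$, making the relevant coefficient tuples lie in a single compact set at each stage), whereas you avoid scaling and instead introduce the extra parameter $j'$ bounding $|r_{k_0}|$ from below to recover compactness; both devices serve the same purpose and the arguments are otherwise parallel.

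Two cosmetic remarks: your codimension of $N+\spn_R(m_0,\dots,m_{k-1})$ is $\dim_R(M/N)-k$, not $\dim_R(M/N)$, though of course this is still uncountable; and your cardinality argument for $\dim_R\ell^1(R)$ being uncountable only works as stated when $R$ is countable (for non-discrete $R$ one instead invokes Baire category, since a countable-dimensional Polish $R$-vector space would be a countable union of closed proper subspaces).
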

In most natural examples,
$N$ is $F_\sigma$,
such as for $\ell^1(\N) \subseteq \ell^2(\N)$.
It would be interesting to prove this for more general subspaces.

\begin{proof}
    Suppose that the dimension of $M/N$ is uncountable.
    Then $N$ is not open,
    so $N$ is meager,
    i.e.
    we have $N = \bigcup_k F_k$
    for some increasing sequence $(F_k)_k$
    of closed nowhere dense sets.
    Fix a complete norm $\norm{\cdot}$ compatible with $(M, +)$.
    For every $k$,
    we define $\ve_k > 0$ and $m_k \in M$
    such that the image of $(m_k)_k$ in $M/N$
    is linearly independent over $R$.
    We proceed by induction on $k$.
    Choose $\ve_k > 0$ such that
    \begin{enumerate}[label=(\roman*)]
        \item $\ve_k < \frac{1}{2}\ve_i$
            for every $i < k$,
        \item for every $(r_i)_{i < k}$
            such that $\sum_{i < k}\frac{|r_i|}{i!} \le k$
            and there is some $l < k$
            with $r_l = 1$ and $r_i = 0$ for $i < l$,
            the open $\ve_k$-ball
            centered at $\sum_{i < k} r_i m_i$
            is disjoint from $F_k$.
    \end{enumerate}
    Then choose $m_k \in M$ such that
    \begin{enumerate}[label=(\roman*)]
        \item $m_k \notin N + Rm_0 + Rm_1 + \cdots + Rm_{k-1}$,
        \item $\norm{r m_k} < \frac{1}{2}\ve_k$
            whenever $\frac{|r|}{k!} \le k$.
    \end{enumerate}
    We verify that this is possible.
    When choosing $\ve_k$,
    to satisfy the second condition,
    note that the set of considered $(r_i)_{i < k}$ is compact,
    so the set of $\sum_{i < k} r_i m_i$ is also compact,
    and it is disjoint from $N$
    (and hence $F_k$)
    by the choice of $(m_i)_{i < k}$.
    Thus such an $\ve_k$ must exist.
    When choosing $m_k$,
    note that the first condition holds for a comeager set of $m_k$,
    since $N + Rm_0 + Rm_1 \cdots + Rm_{k-1}$ is analytic,
    and it is not open,
    since otherwise $M/N$ would have countable dimension.
    The second condition holds for an open set of $m_k$,
    since the set of $r$ with $\frac{|r|}{k!} \le k$ is compact.
    Thus such an $m_k$ must exist.
    
    We define a map $\ell^1(R) \hra M$ by
    \[
        (r_k)_k \mapsto \sum_k r_k m_k.
    \]
    First we show that this is well-defined,
    from which linearity and continuity are immediate.
    Let $(r_k)_k \in \ell^1(R)$ be nonzero.
    By scaling,
    we can assume that there is some $l$ such that
    $r_l = 1$ and $r_i = 0$ for $i < l$.
    Let $n > l$ be sufficiently large
    such that $\sum_k \frac{|r_k|}{k!} \le n$
    and $0 \in F_n$.
    Then
    \[
        \ve_n \le \norm{\sum_{k < n} r_k m_k}.
    \]
    For every $i$,
    we have $\norm{r_{n+i} m_{n+i}} < \frac{1}{2} \ve_{n+i}$,
    and thus $\norm{r_{n+i} m_{n+i}} < \frac{1}{2^{i+1}} \ve_n$
    by inductively using
    $\ve_{k+1} < \frac{1}{2} \ve_k$.
    Thus
    \[
        \norm{r_{n+i} m_{n+i}}
        < \frac{1}{2^{i+1}}\norm{\sum_{k < n} r_k m_k}.
    \]
    Thus $\sum_k r_k m_k$ is well-defined with
    \[
        \norm{\sum_k r_k m_k}
        < 2\norm{\sum_{k < n} r_k m_k}.
    \]
    
    It remains to show that the induced map
    $\ell^1(R)\to M/N$ is an injection.
    Let $(r_k)_k \in \ell^1(R)$ be nonzero.
    By scaling,
    we can assume that there is some $l$ such that
    $r_l = 1$ and $r_i = 0$ for $i < l$.
    Suppose that $n > l$ is sufficiently large such that
    $\sum_k \frac{|r_k|}{k!} \le n$.
    Since $\norm{r_{n+i} m_{n+i}} < \frac{1}{2^{i+1}} \ve_n$,
    we have $\sum_{i \ge 0} \norm{r_{n+i} m_{n+i}} < \ve_n$,
    and so $\sum_k r_k m_k \notin F_n$.
    This holds for all sufficiently large $n$,
    so $\sum_k r_k m_k \notin N$.
\end{proof}

We recover \cite[Theorem 24]{Mil12} for proper normed division rings:
\begin{cor}[Miller]\label{miller}
    Let $R$ be a proper normed division ring,
    and let $M$ be a Polish $R$-module.
    If $\dim_R(M)$ is uncountable,
    then there is a linearly independent perfect subset of $M$.
\end{cor}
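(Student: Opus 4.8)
The plan is to deduce this from \Cref{division} (equivalently, from \Cref{divisionQuotient} with $N = 0$). Since $\dim_R(M)$ is uncountable, option (1) of \Cref{division} fails, so option (2) holds: there is a continuous linear injection $f \colon \ell^1(R) \hra M$. The idea is then to pull back to $\ell^1(R)$, where we have complete concrete control, and exhibit a linearly independent perfect set there whose image under $f$ remains linearly independent. Because $f$ is linear and injective, it automatically carries a linearly independent set to a linearly independent set, and it carries a perfect set to a perfect set (a continuous injection from a Polish space is a homeomorphism onto its image, so topological perfectness is preserved). Hence it suffices to produce a linearly independent perfect subset of $\ell^1(R)$ itself.

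To build such a set inside $\ell^1(R)$, I would use the standard splitting construction along the binary tree $2^{<\N}$. Partition $\N$ into infinitely many infinite blocks $(B_n)_{n \in \N}$, and for each finite binary string $s$ of length $n$, assign a weight vector supported on $B_n$; concretely, to each $x \in 2^\N$ associate the element $v_x = \sum_n \frac{1}{c_n} e_{k(x \restriction n)} \in \ell^1(R)$, where $e_j$ is the $j$-th standard basis vector of $\ell^1(R)$, $k(\cdot)$ is an injective enumeration of $2^{<\N}$ into the appropriate block, and $c_n$ grows fast enough (using properness of the norm on $R$, e.g. $c_n \ge n! \cdot 2^n$) to keep $\sum_n \frac{1}{c_n}|1| < \infty$. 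The map $x \mapsto v_x$ is continuous and injective, so its image is perfect. For linear independence of $\{v_{x_1}, \ldots, v_{x_m}\}$ with the $x_i$ distinct: fix a level $n$ at which all $m$ strings $x_i \restriction n$ are pairwise distinct; then each $v_{x_i}$ has a coordinate (namely the one indexed by $k(x_i \restriction n)$) where it is nonzero and all the other $v_{x_j}$ vanish, since we may take the $x_i \restriction n$ distinct for all but finitely many levels — more carefully, for \emph{every} sufficiently large level the strings are distinct, and the coordinates $k(x_i \restriction n)$ for varying $n$ are all distinct, so any nontrivial $R$-linear combination $\sum_i r_i v_{x_i}$ with, say, $r_1 \neq 0$, has nonzero entry $r_1$ in position $k(x_1 \restriction n)$ for all large $n$ where $x_1 \restriction n \ne x_j \restriction n$ for all $j \ne 1$; thus the combination is nonzero.

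The one point that needs a small argument rather than a routine check is the interaction between the division-ring structure and the coordinatewise argument: when I say a nontrivial combination $\sum_i r_i v_{x_i}$ is nonzero, I should pick an index $i_0$ with $r_{i_0} \ne 0$ and a level $n$ large enough that the strings $x_i \restriction n$ are pairwise distinct; then in the single coordinate $k(x_{i_0} \restriction n)$ the value is exactly $r_{i_0} \cdot \frac{1}{c_n} \ne 0$ (here we use that $R$ has no zero divisors, which holds as $R$ is a division ring), so the sum is a nonzero element of $\ell^1(R)$. I expect the main obstacle to be purely bookkeeping: choosing the enumeration $k$ and the blocks $B_n$ so that distinct branches genuinely separate at cofinitely many levels and the assigned coordinates never collide. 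Once that is arranged the linear independence is immediate, and transporting everything through $f$ finishes the proof.
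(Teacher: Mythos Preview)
Your overall strategy is sound and gives a legitimate alternative to the paper's argument. The paper also reduces to $\ell^1(R)$ via \Cref{division}, but then, rather than a tree construction, it writes down the explicit family $(\chi_x)_{x\in\R}$ in $\{0,1\}^\N \subseteq \ell^1(R)$, where $(\chi_x)_n = 1$ iff $q_n < x$ for a fixed enumeration $(q_n)_n$ of $\Q$; this is an uncountable linearly independent Borel set, and the paper finishes by invoking the perfect set theorem to extract a perfect subset. Your direct Cantor-scheme construction avoids that appeal, at the cost of more bookkeeping.

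Two points need repair, though neither breaks the approach. First, the parenthetical justification ``a continuous injection from a Polish space is a homeomorphism onto its image'' is false (take $[0,1) \to S^1$); what actually makes your argument work is that the perfect set you build is a continuous injective image of the \emph{compact} space $2^\N$, so $f$ restricted to it is automatically a homeomorphism onto its image in $M$. Second, the coefficients $1/c_n$ do not make sense as elements of a general division ring (think of $R = \Z/p\Z$), and even reading $c_n$ as an element of $R$ with large norm, submultiplicativity only yields the lower bound $|c_n^{-1}| \ge |1|/|c_n|$, not the smallness you want. Fortunately they are unnecessary: since the norm on $\ell^1(R)$ already carries the weights $1/k!$, taking $v_x = \sum_n e_{k(x\restriction n)}$ with every coefficient equal to $1 \in R$ gives $\|v_x\| \le |1|\sum_m 1/m! < \infty$ (the positions $k(x\restriction n)$ being distinct by injectivity of $k$), and the rest of your argument goes through unchanged.
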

\begin{proof}
    By \Cref{division},
    we can assume that $M = \ell^1(R)$.
    Fix an enumeration $(q_n)_{n \in \N}$ of $\Q$.
    For every $x \in \R$,
    define $\chi_x \in \ell^1(R)$ by
    \[
        (\chi_x)_n =
        \begin{cases}
            1 & q_n < x \\
            0 & \text{otherwise}
        \end{cases}
    \]
    Then $(\chi_x)_{x \in \R}$ 
    is an uncountable linearly independent Borel subset of $\ell^1(R)$,
    so we are done by taking any perfect subset of this.
\end{proof}

There is an analogous generalization of \Cref{discrete}.

\begin{thm}\label{discreteQuotient}
    Let $R$ be a left-Noetherian discrete proper normed ring,
    let $M$ be a Polish $R$-module,
    and let $N \subseteq M$ be an $F_\sigma$ submodule.
    Then exactly one of the following holds:
    \begin{enumerate}[label=(\arabic*)]
        \item $M/N$ is countable.
        \item $\ell^1(R)/\ell^1(I) \sqsubseteq^R M/N$
            for some {proper\footnotemark} two-sided ideal $I \tl R$.
            In particular,
            there is a linear injection $\ell^1(R/I) \hra M/N$.
    \end{enumerate}
\end{thm}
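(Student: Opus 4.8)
The plan is to re-run the construction in the proof of \Cref{divisionQuotient}, replacing the linear independence of the $m_k$ modulo $N$ by the requirement that the continuous linear map $\phi\colon(r_k)_k\mapsto\sum_k r_k m_k$ descend to an injection $\ell^1(R)/\ell^1(I)\hra M/N$ for a suitably chosen two-sided ideal $I\tl R$. As in \Cref{divisionQuotient}, we may assume $M/N$ is uncountable, so $N$ is meager; fix an increasing sequence $(F_k)_k$ of closed nowhere dense sets with $N=\bigcup_k F_k$, $0\in F_0$ and $F_j+F_k\subseteq F_{j+k}$. Since $R$ is discrete and properly normed, every ball $\{r\in R:|r|\le c\}$ is finite, so at each stage only finitely many coefficient tuples must be considered --- a convenient simplification over the division-ring case.

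The first task is to isolate $I$. For an $F_\sigma$ submodule $N'\subseteq M$ and $r\in R$, the set $(r\cdot)^{-1}(N')=\{m\in M:rm\in N'\}$ is an $F_\sigma$ subgroup, hence meager or clopen by Pettis's lemma, and --- using that $N'$ is a submodule --- $I(N'):=\{r\in R:(r\cdot)^{-1}(N')\text{ is non-meager}\}$ is a two-sided ideal, proper precisely when $N'$ is meager (equivalently, when $M/N'$ is uncountable). The ideal $I(N')$ increases with $N'$, so iterating $N'\mapsto\{m\in M:rm\in N'\text{ for all }r\in I(N')\}$ from $N$ produces an ascending chain of left ideals; by left-Noetherianness this chain stabilizes, and that is what lets one choose a proper two-sided ideal $I$ together with a submodule $\tilde N\supseteq N$ with $I\tilde N\subseteq N$ and, crucially, $\tilde N$ non-meager (possibly only after refining the topology of $M$ to a finer Polish one keeping $\tilde N$ Borel). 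Pinning down this pair $(I,\tilde N)$ is the most delicate part of the argument.

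With $I$ and $\tilde N$ in hand, build $\ve_k>0$ and $m_k\in\tilde N$ by induction, imitating \Cref{divisionQuotient}: the $\ve_k$-ball about $\sum_{i<k}r_i m_i$ is disjoint from $F_k$ whenever $\sum_{i<k}|r_i|/i!\le k$ and some $r_i\notin I$; and $m_k$ lies in a small clopen neighbourhood $U_k$ of $0$ in $\tilde N$, with $\norm{rm_k}<\tfrac12\ve_k$ whenever $|r|/k!\le k$, avoiding (for each relevant tuple) the meager set $\{m:r_k m+\sum_{i<k}r_i m_i\in N\}$ when $r_k\notin I$. The tension between ``$m_k\in\tilde N$'' and ``avoid those meager sets'' is precisely what the non-meagerness of $\tilde N$ resolves; the remaining requirements are met as in \Cref{divisionQuotient}. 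One also notes that if $r_k\in I$ but some earlier coefficient lies outside $I$, then the partial sum automatically leaves $N$, since $r_k m_k\in N$ and $N$ is a subgroup. As before, the norm estimates make $\phi$ well-defined, continuous and linear.

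It remains to check $\phi^{-1}(N)=\ell^1(I)$. The inclusion $\phi^{-1}(N)\subseteq\ell^1(I)$ is the injectivity argument of \Cref{divisionQuotient}: if $(r_k)_k\notin\ell^1(I)$, then after truncating some relevant tuple has an entry outside $I$, and the $\ve_k$-ball conditions together with the geometric decay of $\norm{r_k m_k}$ keep $\sum_k r_k m_k$ off $F_n$ for all large $n$, hence off $N$. The reverse inclusion is the genuinely new point: for $(r_k)_k\in\ell^1(I)$ each term $r_k m_k$ lies in $N$, but $N$ is only $F_\sigma$, so a priori the sum need not. The remedy is to shrink the $U_k$ fast enough that, for each bound $B$, every finite partial sum $\sum_{k<K}r_k m_k$ with $\sum_k|r_k|/k!\le B$ stays inside a single closed set $F_{k_0(B)}$ --- here one uses that norm-bounded balls of $\ell^1(R)$ are compact (as $R$ is discrete), together with the finiteness of bounded-norm subsets of $R$ --- so that letting $K\to\infty$ places $\phi((r_k)_k)$ in $F_{k_0(B)}\subseteq N$. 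Finally $\ell^1(R)/\ell^1(I)\cong\ell^1(R/I)$ since $\ell^1(I)=\ker(\ell^1(R)\thra\ell^1(R/I))$, giving the ``in particular'' clause. I expect the two intertwined difficulties --- selecting $(I,\tilde N)$ via the Noetherian chain so the induction can run, and coordinating the neighbourhoods $U_k$ with the exhaustion $(F_k)_k$ so that $\ell^1(I)$ maps into the non-closed set $N$ --- to be the main obstacles.
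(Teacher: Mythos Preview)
Your proposal follows the paper's argument closely; the differences are in execution rather than strategy. The paper's route to the ideal $I$ is much more direct than your iterated-submodule construction: it fixes a decreasing neighbourhood basis $(U_k)_k$ of $0$ in $M$ and sets $I_k = \{r \in R : rU_k \subseteq N\}$, an increasing chain of left ideals that stabilizes at some $I = I_n$ by Noetherianness; a one-line continuity argument (pull $U_k$ back through right multiplication by any $r\in R$) shows $I$ is two-sided. One then replaces $M$ by the open (hence closed, hence Polish) submodule generated by $U_n$, after which $IM \subseteq N$ holds outright and, for each $r \notin I$ and each $m' \in M$, the set $\{m : rm \in N + m'\}$ is meager---exactly the input the induction needs. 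There is no iteration of a submodule operator and no passage to a finer Polish topology: your $\tilde N$ is simply this open submodule, which the paper renames $M$. With that reduction in place, the paper treats the inclusion $\phi(\ell^1(I)) \subseteq N$ (your ``genuinely new point'') as immediate from $IM \subseteq N$, whereas your compactness scheme for trapping all bounded-norm partial sums inside a single $F_{k_0(B)}$ is not obviously workable as written: it would require controlling, uniformly in $k$, which closed piece $F_j$ each term $r_k m_k$ lands in when $r_k\in I$, and nothing in your construction bounds those indices as $k\to\infty$. The remainder of your sketch---the inductive choice of $\ve_k$ and $m_k$, the well-definedness estimate, and the injectivity argument---matches the paper essentially verbatim, with the pleasant simplification (which you note) that only finitely many coefficient tuples are in play at each stage since $R$ is discrete.
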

\footnotetext{By proper,
we mean a proper subset
(no relation to proper norms).}
 
\begin{proof}
    Suppose that $M/N$ is not countable.
    Then $N$ is not open,
    and thus meager.
    Let $(U_k)_k$ be a descending neighborhood basis of $0 \in M$,
    and let $I_k = \{r \in R : rU_k \subseteq N\}$.
    Then $(I_k)_k$ is an increasing sequence of ideals,
    so since $R$ is left-Noetherian,
    this sequence stabilizes at some $I = I_n$.
    Note that $I$ is a proper ideal,
    since otherwise $U_n \subseteq N$,
    a contradiction to $N$ being meager.
    Note also that $I$ is a two-sided ideal,
    since if $r \in R$,
    then there is some $k > n$ with $rU_k \subseteq U_n$,
    and thus $IrU_k \subseteq IU_n \subseteq N$,
    and thus $Ir\subseteq I$.
    By replacing $M$ with the submodule generated by $U_n$
    (which is analytic non-meager, and therefore open),
    we can assume that for every nonempty open $V \subseteq M$,
    we have $\{r \in R : rV \subseteq N\} = I$.
    Then for every $r \notin I$,
    the subgroup $\{m \in M : rm \in N\}$ is not open,
    and therefore meager.
    Thus more generally,
    if $m' \in M$,
    then $\{m \in M: rm \in N + m'\}$ is meager.
    
    Fix a complete norm $\norm{\cdot}$ compatible with $(M, +)$.
    Let $(F_k)_k$ be an increasing sequence
    of closed nowhere dense sets
    with $N = \bigcup_k F_k$.
    For every $k$,
    we define $\ve_k > 0$ and $m_k \in M$
    such that the image of $(m_k)_k$ in $M/N$
    is linearly independent over $R/I$.
    We proceed by induction on $k$.
    Choose $\ve_k > 0$ such that
    \begin{enumerate}[label=(\roman*)]
        \item $\ve_k < \frac{1}{2}\ve_i$
            for every $i < k$,
        \item for every $(r_i)_{i < k}$ with
            $\sum_{i < k} r_i m_i$ nonzero
            and $\sum_{i < k}\frac{|r_i|}{i!} \le k$,
            we have $\ve_k \le \norm{\sum_{i < k} r_i m_i}$,
        \item for every $(r_i)_{i < k}$ with
            $\sum_{i < k} r_i m_i \notin N$
            and $\sum_{i < k}\frac{|r_i|}{i!} \le k$,
            the open $\ve_k$-ball
            centered at $\sum_{i < k} r_i m_i$
            is disjoint from $F_k$.
    \end{enumerate}
    Then choose $m_k \in M$ such that
    \begin{enumerate}[label=(\roman*)]
        \item $r m_k \notin N + Rm_0 + Rm_1 + \cdots + Rm_{k-1}$
            for every $r \notin I$,
        \item $\norm{r m_k} < \frac{1}{2}\ve_k$
            whenever $\frac{|r|}{k!} \le k$.
    \end{enumerate}
    We verify that this is possible.
    When choosing $\ve_k$,
    for the second and third condition,
    there is only a finite set of $\sum_{i < k} r_i m_i$ to consider,
    and for the third condition,
    this set is disjoint from $N$,
    and hence from $F_k$.
    Thus such an $\ve_k$ must exist.
    When choosing $m_k$,
    for the first condition,
    for a fixed $r \notin I$ and $m' \in Rm_0 + \cdots + Rm_{k-1}$,
    we have shown earlier that $\{rm \notin N + m'\}$ is comeager,
    so by quantifying over the countably many $r$ and $m'$,
    the set of $m_k$ satisfying the first condition is comeager.
    The second condition holds for an open set of $m_k$,
    since the set of $r$ with $\frac{|r|}{k!} \le k$ is finite.
    Thus such an $m_k$ must exist.
    
    We define a map $\ell^1(R) \hra M$ by
    \[
        (r_k)_k \mapsto \sum_k r_k m_k.
    \]
    First we show that this is well-defined,
    from which linearity and continuity are immediate.
    Let $(r_k)_k \in \ell^1(R)$.
    We can assume that there is some $n$ such that
    $\sum_{k < n} r_k m_k$ is nonzero
    and $\sum_{k < n} \frac{|r_k|}{k!} \le n$.
    Then
    \[
        \ve_n \le \norm{\sum_{k < n} r_k m_k}.
    \]
    For every $i$,
    we have $\norm{r_{n+i} m_{n+i}} < \frac{1}{2} \ve_{n+i}$,
    and thus $\norm{r_{n+i} m_{n+i}} < \frac{1}{2^{i+1}} \ve_n$
    by inductively using
    $\ve_{k+1} < \frac{1}{2} \ve_k$.
    Thus
    \[
        \norm{r_{n+i} m_{n+i}}
        < \frac{1}{2^{i+1}}\norm{\sum_{k < n} r_k m_k}.
    \]
    Thus $\sum_k r_k m_k$ is well-defined with
    \[
        \norm{\sum_k r_k m_k}
        < 2\norm{\sum_{k < n} r_k m_k}.
    \]
    
    It remains to show that the kernel of
    the induced map $\ell^1(R)\to M/N$ is $\ell^1(I)$.
    The kernel clearly contains $\ell^1(I)$,
    since $IM \subseteq N$.
    Now let $(r_k)_k \in \ell^1(R) \setminus \ell^1(I)$.
    Since the image of $(r_k)_k$ in $M/N$
    is linearly independent over $R/I$,
    if $n$ is sufficiently large,
    then $\sum_{k < n} r_k m_k \notin N$
    and $\sum_k \frac{|r_k|}{k!} \le n$.
    Since $\norm{r_{n+i} m_{n+i}} < \frac{1}{2^{i+1}} \ve_n$,
    we have $\sum_{i \ge 0} \norm{r_{n+i} m_{n+i}} < \ve_n$,
    and so $\sum_k r_k m_k \notin F_n$.
    This holds for all sufficiently large $n$,
    so $\sum_k r_k m_k \notin N$.
\end{proof}

\bibliographystyle{alpha}
\bibliography{polish_modules}

\begin{thebibliography}{DvM09}

\bibitem[Ber14]{Ber14}
Konstantinos~A. Beros.
\newblock Universal subgroups of {P}olish groups.
\newblock {\em J. Symb. Log.}, 79(4):1148--1183, 2014.

\bibitem[Ber18]{Ber18}
Konstantinos~A. Beros.
\newblock Homomorphism reductions on {P}olish groups.
\newblock {\em Arch. Math. Logic}, 57(7-8):795--807, 2018.

\bibitem[BK96]{BK96}
Howard Becker and Alexander~S. Kechris.
\newblock {\em The descriptive set theory of {P}olish group actions}, volume
  232 of {\em London Mathematical Society Lecture Note Series}.
\newblock Cambridge University Press, Cambridge, 1996.

\bibitem[BLP20]{BLP20}
Jeffrey Bergfalk, Martino Lupini, and Aristotelis Panagiotopoulos.
\newblock The definable content of homological invariants {I}: $\mathrm{Ext}$
  \& $\lim^1$.
\newblock {\em arXiv:2008.08782}, 2020.

\bibitem[DvM09]{DvM09}
Jan~J. Dijkstra and Jan van Mill.
\newblock Characterizing complete {E}rd{\H{o}}s space.
\newblock {\em Canad. J. Math.}, 61(1):124--140, 2009.

\bibitem[Gao09]{Gao09}
Su~Gao.
\newblock {\em Invariant descriptive set theory}, volume 293 of {\em Pure and
  Applied Mathematics (Boca Raton)}.
\newblock CRC Press, Boca Raton, FL, 2009.

\bibitem[HKL90]{HKL90}
L.~A. Harrington, A.~S. Kechris, and A.~Louveau.
\newblock A {G}limm-{E}ffros dichotomy for {B}orel equivalence relations.
\newblock {\em J. Amer. Math. Soc.}, 3(4):903--928, 1990.

\bibitem[Kal77]{Kal77}
N.~J. Kalton.
\newblock Universal spaces and universal bases in metric linear spaces.
\newblock {\em Studia Math.}, 61(2):161--191, 1977.

\bibitem[Kec95]{Kec95}
Alexander~S. Kechris.
\newblock {\em Classical descriptive set theory}.
\newblock Springer, 1995.

\bibitem[KST99]{KST99}
A.~S. Kechris, S.~Solecki, and S.~Todorcevic.
\newblock Borel chromatic numbers.
\newblock {\em Adv. Math.}, 141(1):1--44, 1999.

\bibitem[Mil12]{Mil12}
Benjamin~D. Miller.
\newblock The graph-theoretic approach to descriptive set theory.
\newblock {\em Bull. Symbolic Logic}, 18(4):554--575, 2012.

\bibitem[Shk99]{Shk99}
S.~A. Shkarin.
\newblock On universal abelian topological groups.
\newblock {\em Mat. Sb.}, 190(7):127--144, 1999.

\bibitem[Sil80]{Sil80}
Jack~H. Silver.
\newblock Counting the number of equivalence classes of {B}orel and coanalytic
  equivalence relations.
\newblock {\em Ann. Math. Logic}, 18(1):1--28, 1980.

\bibitem[Sol99]{Sol99}
S{\l}awomir Solecki.
\newblock Polish group topologies.
\newblock In {\em Sets and proofs ({L}eeds, 1997)}, volume 258 of {\em London
  Math. Soc. Lecture Note Ser.}, pages 339--364. Cambridge Univ. Press,
  Cambridge, 1999.

\end{thebibliography}
\end{document}